\documentclass[12pt, reqno]{amsart}

\usepackage{amsfonts,amssymb,latexsym,amsmath, amsxtra,cancel,comment  }
\usepackage[utf8]{inputenc}
\usepackage{verbatim}
\usepackage{appendix}
\usepackage{marginnote}
\usepackage{enumitem}
\pagestyle{myheadings}
%\textheight=7 true in

%\usepackage{refcheck}

\textwidth=6 true in
%\hoffset=-0.8true in
\hoffset=-0.5true in

\usepackage[OT2,T1]{fontenc}
\DeclareSymbolFont{cyrletters}{OT2}{wncyr}{m}{n}
\DeclareMathSymbol{\Sha}{\mathalpha}{cyrletters}{"58}

\begin{comment}
\stackMath
\newcommand\reallywidehat[1]{%
	\savestack{\tmpbox}{\stretchto{%
			\scaleto{%
				 scalerel*[\widthof{\ensuremath{#1}}]{\kern-.6pt\bigwedge\kern-.6pt}%
				{\rule[-\textheight/2]{1ex}{\textheight}}%WIDTH-LIMITED BIG WEDGE
			}{\textheight}%
		}{0.5ex}}%
	\stackon[1pt]{#1}{\tmpbox}%
}
\parskip 1ex
\end{comment}

%%%%%%%%%%%%%%%%%   Newtheorem   %%%%%%%%%%%%%%%%%%

\theoremstyle{plain}
\newtheorem{theorem}{Theorem}[section]
\newtheorem*{theorem*}{Theorem}
\newtheorem{corollary}[theorem]{Corollary}

\newtheorem{proposition}[theorem]{Proposition}
\newtheorem*{conjecture*}{Conjecture}

\theoremstyle{definition}

\theoremstyle{remark}
\newtheorem*{remark}{Remark}
\newtheorem*{remarks}{Remarks}

\numberwithin{equation}{section}

\newenvironment{psmallmatrix}
{\left(\begin{smallmatrix}}
	{\end{smallmatrix}\right)}
%%%%%%%%%%%%%%%%%%  Newcommand  %%%%%%%%%%%%%%%%%%

\setlength{\parskip}{0pt}

\newcommand{\Z}{\mathbb{Z}}
\newcommand{\N}{\mathbb{N}}

\newcommand{\R}{\mathbb{R}}
\newcommand{\C}{\mathbb{C}}

\newcommand{\im}{\operatorname{Im}}
\renewcommand{\Im}{\operatorname{Im}}
\newcommand{\sgn}{\operatorname{sgn}}

\newcommand{\SL}{\text{\rm SL}}
\makeatletter
\newcommand*{\rom}[1]{\expandafter\@slowromancap\romannumeral #1@}
\makeatother

\DeclareMathOperator{\coeff}{coeff}

\makeatletter
\newcommand{\vast}{\bBigg@{4}}
\newcommand{\Vast}{\bBigg@{5}}
\makeatother

%%%%%%%%%%%%%%%%%%%%%%%%%%%%%%%%%%%%%%%%%%%%%

\renewenvironment{proof}[1][Proof]{\begin{trivlist} \item[\hskip \labelsep {\bfseries #1:}]}{\qed\end{trivlist}}

\title{Nontrivial Short}

\author{Kathrin Bringmann, Karl Mahlburg, Antun Milas}

\address{Mathematical Institute, University of Cologne, Weyertal 86-90, 50931 Cologne, Germany}
\email{kbringma@math.uni-koeln.de}

\address{Department of Mathematics, Louisiana State University, Baton Rouge, LA 70803, USA}
\email{mahlburg@math.lsu.edu}

\address{Department of Mathematics and Statistics, SUNY-Albany, Albany, NY 12222, U.S.A.}
\email{amilas@albany.edu}

\begin{document}

\title{On characters of $L_{\frak{sl}_n}(-\Lambda_0)$-modules}

\begin{abstract}
We use recent results of Rolen, Zwegers, and the first author to study characters of irreducible (highest weight) modules for the vertex operator
algebra $L_{\frak{sl}_\ell}(-\Lambda_0)$.  We establish asymptotic behaviors of characters for the (ordinary) irreducible
$L_{\frak{sl}_\ell}(-\Lambda_0)$-modules. As a consequence we prove that their quantum dimensions are one, as predicted by representation theory.
We also establish a full asymptotic expansion of irreducible characters for $\frak{sl}_3$. Finally, we determine a decomposition formula for the full characters in terms of unary theta and false theta functions which allows us to study their modular properties.
\end{abstract}
\maketitle

\section{Introduction and statement of results}

%{\bf AM: feel free to edit/add}
Vertex operator algebras have had a profound influence on mathematics and modern theoretical physics.
%The relationship between the characters of modules and modular forms played an important part in their development.
Arguably, the most important examples of vertex algebras are those associated to representations of the affine Kac-Moody Lie algebras at positive integral levels. Every such vertex algebra
is rational \cite{FZ} and its characters are modular functions on the same congruence subgroup . Moreover, characters of representations of affine vertex algebras at certain rational levels, called  admissible, are also modular  \cite{KW1}.
%{\bf TD: Add more precise references where possible. AM: sure we can do this at the end...it seems there are still math issues.}
%These
%two examples of vertex algebras are well-understood and have been studied from different standpoints.

\subsection{Affine vertex algebras at negative levels}
More recently, a notable effort has been made to understand modules of
affine vertex algebras at a negative integer level \cite{AP,AP2,AK,AMo,KW2}, etc. Although there is no general Weyl-Kac type formula at these levels, in a few cases their characters (or  $q$-characters) are known explicitly. Here a prominent role is played by the so-called Deligne series of representations at the level $-\frac{h^\vee}{6}-1$ for the Lie algebras of type $D_4,E_6,E_7,$ and $E_8$ \cite{AK,KW2}. These vertex algebras are irrational and quasi-lisse, and thus their characters are quasi-modular and are also solutions of certain modular linear differential equations  \cite{AK}. The search for new examples off quasi-lisse (and lisse) vertex algebras is a very active area of current research with important applications to $N=2$ four dimensional CFT \cite{BLL} and logarithmic CFT \cite{CM2}.

Apart from Deligne's series, specialized characters of $L_{\frak{sl}_n}(-\Lambda_0)$-modules have been studied in several recent works
\cite{BCR,KW2}. This vertex algebra is no longer quasi-lisse but their characters still enjoy interesting ``quantum'' modular properties.  More precisely, it was proven in \cite{BCR,BRZ} that the
irreducible characters of level $-1$ are mixed quantum modular forms by realizing them as Fourier coefficients of meromorphic Jacobi forms of negative index.
%Despite this progress made in understanding quantum
%modular properties it is not quite clear how to connect these ideas directly to representation theory.
%We should mention that the vertex algebra $L_{\frak{sl}_n}(-\Lambda_0)$ admits an infinite simple current extension whose (super)characters are quasi-modular forms
%\cite{AM}.

\subsection{Quantum dimensions and asymptotics} Somewhat independent of these developments, in \cite{CM2, CMW} the authors initiated a study of asymptotic
expansions of characters of modules for irrational vertex algebras. Motivated by the success of the theory of rational vertex algebras \cite{H}, it is expected
that asymptotic expansions of characters can be used to formulate a Verlinde-type formula for the fusion rules.
For the singlet vertex algebra and generalizations this was recently established by using the so-called regularized quantum dimension \cite{CM2}.
Despite the progress made it is still unclear how to formulate a Verlinde-type formula for more general vertex algebras. The main issue is that vertex algebras can have representations
with very different properties, which makes it difficult to define quantum dimensions, $S$-matrices, and other standard invariants.
%Also, it seems
% An important question in vertex algebra  concerns modules satisfying a certain cofiniteness condition called $C_1$-cofiniteness.
%As argued in \cite{BM2}, and verified on numerous examples, quantum dimensions of are supposed to provide a one-dimensional representation of the fusion ring.
%The main reason why one want to study the full asymptotic expansion has to do quantum (or asymptotic) dimension of characters.
But for vertex algebras whose irreducible modules are $C_1$-cofinite we expect that asymptotically (as $t \to 0^+$)
\begin{equation} \label{quantum-asym}
\frac{{\rm ch}_M(it)}{{\rm ch}_V(it)} \sim a_0 +a_1 t+ \cdots
\end{equation}
for any irreducible $V$-module $M$. In particular, \eqref{quantum-asym} would imply $\lim_{t \to 0^+}\frac{{\rm ch}_M(it)}{{\rm ch}_V(it)}=a_0$, so we have a well-defined notion of quantum dimension (the existence of the limit does not necessarily require (\ref{quantum-asym}) though).
In order to gain a better insight into irrational vertex algebras it is desirable to study asymptotic expansion of characters, and hence of (\ref{quantum-asym}), for various families of affine vertex algebras (for related $W$-algebra computations see \cite{CM2}).

\subsection{Main results} In this paper we study asymptotic and modular properties of characters of the vertex algebra $L_{\frak{sl}_n} (-\Lambda_0)$-modules, $n \geq 3$. This vertex algebra is a good representative of an irrational affine vertex algebra admitting
both atypical and typical modules. We only consider atypical representations, leaving Verlinde-type formula for future considerations.

Let us outline the main results.
For brevity, we let $\frak{g}=\frak{sl}_\ell$, $\ell \geq 3$, the Lie algebra of trace zero $\ell \times \ell$ matrices. As usual, from $\frak{g}$ we construct the affine Kac-Moody Lie
algebra $\widehat{\frak{g}}$. Denote by $\omega_j$, $j \in \{1,...,\ell \}$, the fundamental weights of $\frak{g}$ and by $\Lambda_j$, $j \in \{0,...,\ell-1\}$, the fundamental weights for $\widehat{\frak{g}}$. For each weight $\Lambda$,  let $L_{\frak{g}}(\Lambda)$ be the corresponding irreducible highest weight $\widehat{\frak{g}}$-module.

We equip $L_{\frak{g}}(-\Lambda_0)$ with a conformal vertex  algebra structure via  Sugawara's construction.
It is known by Adamovic and Perse \cite{AP} that  $L_{\frak{g}}(-\Lambda_0)$ admits countably many inequivalent ordinary modules: $L_{\frak{g}}(- (s+1) \Lambda_0  +s \Lambda_1)$ with $s \in \mathbb{N}_0$ and $L_{\frak{g}}(- (s+1) \Lambda_0  + s  \Lambda_{\ell-1})$ with $s \in \mathbb{N}$, and infinitely many generic modules \cite{AP2}.
%It should be noted that $L_{\frak{g}}(-\Lambda_0 (s+1) +s \Lambda_1)$ and $L_{\frak{g}}(- (s+1) \Lambda_0   + s  \Lambda_{\ell-1})$ are dual to each other so they share the same
%specialized character.
%For this reason we only consider modules $L_{\frak{g}}(- (s+1) \Lambda_0  +s \Lambda_1)$.
%modules from the first group is obtained from the second by taking the dual we only discuss the first group.
%As usual, we equip $L_{\frak{g}}(-\Lambda_0)$ with a conformal vertex  algebra structure via the Sugawara construction.
We denote the degree operator by $L(0)$ and the central charge by $c:=\frac{{\rm dim}(\frak{g}) (-1)}{(-1)+h^\vee}=-(\ell+1)$, where $h^\vee=\ell$ is the dual Coxeter number of $\frak{sl}_\ell$. We use this data to define the relevant characters (here $\Lambda \in
\{ \Lambda_1, \Lambda_{\ell-1} \}$ and $q:=e^{2\pi i\tau}$):
$${\rm ch}[ L_{\frak{g}}(-(s+1)\Lambda_0  +s \Lambda)  ](\tau):={\rm tr}_{L_{\frak{g}}(- (s+1) \Lambda_0  +s \Lambda)} q^{L(0)-\frac{c}{24}}.$$
Let 
$$h_{s}:=\frac{(s \omega_1+2 \rho,s \omega_1)}{2((-1)+h^\vee)}=\frac{(s \omega_{\ell-1}+2 \rho,s \omega_{\ell-1})}{2((-1)+h^\vee)}=\frac{s^2}{2 \ell}+\frac{s}{2}$$ denote the lowest conformal weight of $L_{\frak{g}}(- (s+1) \Lambda_0  +s \Lambda_1)$ and  of $L_{\frak{g}}(- (s+1) \Lambda_0  +s \Lambda_{\ell-1})$. Note that $L_{\frak{g}}(-\Lambda_0 (s+1) +s \Lambda_1)$ and $L_{\frak{g}}(- (s+1) \Lambda_0   + s  \Lambda_{\ell-1})$ are dual to each other so they share the same
character.

%{\bf AM: I'll remove this formula and use Kac-Wakimoto's paper as the official definition of the full character so we don't have too many definitions}
%We also define full characters with Jacobi variables:
%\begin{equation}
%\label{E:fullch}
%{\rm ch}[ M ](\tau; \zeta):={\rm tr}_{M} \zeta_1^{\omega_1} \cdots \zeta_{\ell-1}^{\omega_{\ell-1}} q^{L(0)-\frac{c}{24}}, \ \ \zeta_j:=e(z_j).
%\end{equation}

By using results of Kac and Wakimoto on specialized characters of the affine Lie algebras $\widehat{\frak{sl}}_\ell$ at level $-1$ \cite[Section 1]{KW2}, we obtain, for $s \in\N_0$ and
$\Lambda \in \{ \Lambda_1,\Lambda_{\ell-1} \}$,
\begin{equation}\label{constantcoeff}
{\rm ch}\big[L_{\frak{g}}(- (s+1) \Lambda_0 +s \Lambda)\big](\tau) = q^{h_{s}-\frac{c}{24}} \frac{F_{\ell, s}(q)}{(q)_\infty^{\ell^2-1}},
\end{equation}
where
\begin{equation*}
F_{\ell,s}(q) := (q)_\infty^{\ell^2-1} \coeff_{\zeta^s} \frac{(q)_\infty}{(\zeta)_\infty^\ell\left(\zeta^{-1}q\right)_\infty^\ell}.
\end{equation*}
Here $(a)_{\infty}=(a;q)_{\infty}:=\prod_{j=0}^\infty (1-aq^j)$. Throughout, we are interested in the range $0<y<v$, where we write $z=x+iy$, $\tau=u+iv$. We note that other ranges can be treated in a similar way.

\begin{theorem}\label{main}
	We have, as $t\to 0^+$,
	\begin{equation*}
	F_{\ell,s}\left(e^{-t}\right) \sim \mathcal C_\ell \left(\frac{t}{2 \pi}\right)^{1-\frac{\ell^2}{2}}e^{-\frac{\pi^2\left(\ell^2-2\ell\right)}{6t}},
	\end{equation*}
	where
	\begin{equation*}
	\mathcal C_\ell := \frac{2^{1-2\ell}(\ell-1)!}{\Gamma\left(\frac{\ell+1}{2}\right)^2}.
	\end{equation*}
\end{theorem}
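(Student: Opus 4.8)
The plan is to turn the coefficient extraction defining $F_{\ell,s}$ into a contour integral, rewrite the denominator through the Jacobi theta function, and then read off the $t\to0^+$ asymptotics by modular inversion followed by a rescaling (Laplace/saddle) analysis. Writing $\zeta=e^{2\pi iz}$ with $z=x+iy$ and using the Jacobi triple product in the form $\vartheta(z;\tau)=-iq^{1/8}\zeta^{-1/2}(\zeta)_\infty(\zeta^{-1}q)_\infty(q)_\infty$, I would first record that
\begin{equation*}
\frac{(q)_\infty}{(\zeta)_\infty^\ell(\zeta^{-1}q)_\infty^\ell}=\frac{(q)_\infty^{\ell+1}q^{\ell/8}\zeta^{-\ell/2}}{i^\ell\,\vartheta(z;\tau)^\ell}.
\end{equation*}
Since $\coeff_{\zeta^s}g=\int_0^1 g(e^{2\pi iz})e^{-2\pi isz}\,dx$ along any horizontal line $0<y<v$ (no poles are crossed there), this yields
\begin{equation*}
F_{\ell,s}(q)=\frac{(q)_\infty^{\ell^2+\ell}q^{\ell/8}}{i^\ell}\int_0^1\frac{e^{-2\pi i\left(\frac\ell2+s\right)z}}{\vartheta(z;\tau)^\ell}\,dx .
\end{equation*}
The integrand is $1$-periodic in $z$ (the quasi-periodicity $\vartheta(z+1;\tau)=-\vartheta(z;\tau)$ is compensated by the phase $e^{-2\pi i(\frac\ell2+s)z}$), so the contour may be shifted freely and recentered on $[-\tfrac12,\tfrac12]$, placing the single relevant pole-column at $z=0$.

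Next I would feed in the behavior of the two modular ingredients as $\tau=\tfrac{it}{2\pi}\to0$. The eta transformation gives $(q)_\infty\sim v^{-1/2}e^{-\pi/(12v)}$ with $v=\tfrac t{2\pi}$, while the theta inversion $\vartheta(z/\tau;-1/\tau)=-i(-i\tau)^{1/2}e^{\pi iz^2/\tau}\vartheta(z;\tau)$ together with the $\operatorname{Im}\to\infty$ leading term of $\vartheta$ gives, for $\tau=iv$,
\begin{equation*}
\vartheta(z;\tau)\sim -2v^{-1/2}e^{-\pi z^2/v-\pi/(4v)}\sinh\!\left(\tfrac{\pi z}v\right).
\end{equation*}
Here the hyperbolic sine correctly reproduces the zeros of $\vartheta$ along $z\in iv\Z$, so the approximation stays valid right up to the pole at $z=0$, which is exactly the region that dominates.

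I would then substitute this and rescale $z=v\omega$, $\omega=\xi+i\beta$ with $\beta=y/v\in(0,1)$. The prefactor produces a clean $v^{\ell/2+1}e^{\pi\ell/(4v)}$, and on the dominant part $\omega=O(1)$ the remaining factors $e^{-2\pi i(\frac\ell2+s)v\omega}$ and $e^{\pi\ell v\omega^2}$ tend to $1$; crucially the first of these carries the entire $s$-dependence, so it disappears at leading order and the asymptotic is independent of $s$, as claimed. In the limit the inner integral collapses to the $\beta$-independent constant $\widetilde J:=\int_{-\infty+i\beta}^{\infty+i\beta}\sinh^{-\ell}(\pi\omega)\,d\omega$ ($\beta$-independence follows since $\partial_\beta=i\partial_\xi$ and both endpoints vanish). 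Choosing $\beta=\tfrac12$ turns $\sinh(\pi\omega)$ into $i\cosh(\pi\xi)$, so $\widetilde J=i^{-\ell}\pi^{-1}\int_{-\infty}^\infty\operatorname{sech}^\ell x\,dx=i^{-\ell}\,\Gamma(\tfrac\ell2)/\bigl(\sqrt\pi\,\Gamma(\tfrac{\ell+1}2)\bigr)$ by a Beta integral. Assembling the three pieces, the powers of $v$ combine to $v^{1-\ell^2/2}$ and the exponents to $-\tfrac{\pi(\ell^2-2\ell)}{12v}=-\tfrac{\pi^2(\ell^2-2\ell)}{6t}$, matching the stated shape; the constant simplifies via the Legendre duplication formula $\Gamma(\tfrac\ell2)\Gamma(\tfrac{\ell+1}2)=2^{1-\ell}\sqrt\pi\,\Gamma(\ell)$ to precisely $\mathcal{C}_\ell=2^{1-2\ell}(\ell-1)!/\Gamma(\tfrac{\ell+1}2)^2$ (the accompanying powers of $i$ and $-1$ cancel since $(-1)^\ell i^{-2\ell}=1$).

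The main obstacle is the rigorous justification of this last limit: one must show that replacing $\vartheta$ by its leading modular term is negligible, that $e^{-2\pi i(\frac\ell2+s)v\omega}$ and $e^{\pi\ell v\omega^2}$ may be dropped uniformly on the dominant part of the contour, and that the complementary range — including the interior critical point near $x=\pm\tfrac12$, which contributes only $O(v^{(\ell+1)/2})$ and is exponentially smaller — is genuinely subdominant. I expect to handle this by a major/minor-arc splitting of $[-\tfrac12,\tfrac12]$, with a short arc of width $O(v\log\tfrac1v)$ around $x=0$ carrying the full asymptotic and explicit exponential bounds controlling the remainder, which is exactly the kind of saddle-point/Tauberian estimate furnished by the machinery of Rolen, Zwegers, and the first author.
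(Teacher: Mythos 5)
Your proposal is correct, but it takes a genuinely different route from the paper. The paper never analyzes the Fourier integral directly: it first converts the coefficient extraction into an exact finite $q$-series (Theorem \ref{T:F=Dj}) via the residue theorem of \cite{BRZ}, writing $F_{\ell,s}$ as a sum over $1\leq j\leq\ell$, $j\equiv\ell\pmod 2$, of Laurent coefficients $D_{-j}(\tau)$ of $\eta(\tau)^{3\ell}/\vartheta(z;\tau)^{\ell}$ multiplied by unary partial theta functions; it then obtains asymptotics termwise, using Euler--Maclaurin summation for the partial thetas, the quasimodularity bound $D_{-j}\left(\frac{it}{2\pi}\right)\ll t^{j-\ell}$ to isolate the dominant term ($D_{-1}$ for odd $\ell$, $D_{-2}$ for even $\ell$), and an evaluation of the leading coefficient as a generalized Bernoulli value $B^{(\ell)}_{\ell-1}\left(\frac{\ell}{2}\right)$ resp.\ $B^{(\ell)}_{\ell-2}\left(\frac{\ell}{2}\right)$, which an appendix proposition reconciles with $\mathcal C_\ell$. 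Your direct modular-inversion and Laplace-rescaling analysis of $\int_0^1 e^{-2\pi i\left(s+\frac{\ell}{2}\right)z}\vartheta(z;\tau)^{-\ell}\,dx$ bypasses all of this machinery: it treats even and odd $\ell$ uniformly (the paper must split into two cases), requires no Bernoulli polynomial identities, and yields the closed-form constant at once through the Beta evaluation $\int_{\R}\operatorname{sech}^{\ell}x\,dx=\sqrt{\pi}\,\Gamma\left(\frac{\ell}{2}\right)/\Gamma\left(\frac{\ell+1}{2}\right)$ plus Legendre duplication; amusingly, the paper's own appendix evaluates $B^{(\ell)}_{\ell-1}\left(\frac{\ell}{2}\right)$ via essentially the same $\cosh^{-\ell}$ integral, so the two arguments ultimately meet at that point. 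What the paper's decomposition buys instead is structure: an exact identity rather than an asymptotic one, whose termwise expansion produces the full asymptotic series, exploited for $\ell=3$ in Proposition \ref{P:ell=3Asymp}; your method gives the leading term more cheaply but would need additional work to recover higher-order terms. Two remarks on your final rigor paragraph, both in your favor: the dominated-convergence step is easier than you anticipate, since after rescaling one has $\left|e^{\pi\ell v\omega^2}\sinh^{-\ell}(\pi\omega)\right|\ll e^{-\pi\ell|\xi|/2}$ on $|\xi|\leq\frac{1}{2v}$ (because $v\xi^2\leq\frac{|\xi|}{2}$ there), while replacing $\vartheta$ by its leading $\sinh$ term incurs a uniform relative error $O\!\left(e^{-\pi/v}\right)$ on the whole contour; and the contribution from near $x=\pm\frac12$ is in fact exponentially smaller (roughly by $e^{-\pi\ell/(4v)}$), not merely $O\!\left(v^{(\ell+1)/2}\right)$ as you state, so no delicate major/minor-arc estimate is actually needed.
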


The following corollary follows directly (the same results apply for their dual modules, $L_{\frak{g}}(- (s+1) \Lambda_0  +s \Lambda_{\ell-1})$ as their $q$-characters are equal).
Let $V$ be a vertex algebra. For a $V$-module $M$ we define its {\em quantum} or {\em asymptotic dimension}:
$${\rm qdim}(M):=\lim_{t \to 0^+} \frac{{\rm ch}[M](it)}{{\rm ch}[V](it)}.$$
\begin{corollary}\label{cor:intro} Let $\frak{g}=\frak{sl}_{\ell}$, $\ell \geq 3$ and $s \in \mathbb{N}_0$. Let $V_s$ denote $L_{\frak{g}}(-(s+1)\Lambda_0 + s \Lambda_1)$ or $L_{\frak{g}}(-(s+1)\Lambda_0 + s \Lambda_{\ell-1})$.
Then we have, as $t \to 0^+$,
\begin{equation}\label{asVs}
{\rm ch}\big[V_s \big]\left(e^{-t}\right) \sim \mathcal C_\ell \sqrt{\frac{t}{2\pi}} e^{\frac{\pi^2\left(2 \ell-1 \right)}{6t}}.
\end{equation}
In particular ${\rm qdim}(V_s)=1$ for every $\ell$ and $s$.
%\noindent{\rm (4)} Statements (1)-(3) hold verbatim for $L_{\frak{g}}(- (s+1)\Lambda_0  +s \Lambda_{\ell-1})$ in place of $L_{\frak{g}}(- (s+1)\Lambda_0  +s \Lambda_{1})$.
\end{corollary}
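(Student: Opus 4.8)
The plan is to feed Theorem \ref{main} into the explicit formula \eqref{constantcoeff} and combine it with the classical asymptotics of the Dedekind $\eta$-function. Setting $q = e^{-t}$ in \eqref{constantcoeff} gives
\begin{equation*}
{\rm ch}\big[V_s\big]\left(e^{-t}\right) = e^{-t\left(h_s - \frac{c}{24}\right)}\, \frac{F_{\ell,s}\left(e^{-t}\right)}{\left(e^{-t}\right)_\infty^{\ell^2-1}},
\end{equation*}
and I would analyze the three factors on the right separately as $t \to 0^+$. The prefactor $e^{-t(h_s - c/24)}$ tends to $1$ and so contributes nothing to the leading order; in particular its dependence on $s$ disappears in the limit.

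For the denominator I would invoke the modular transformation $\eta(-1/\tau) = \sqrt{-i\tau}\,\eta(\tau)$ with $\tau = \tfrac{it}{2\pi}$, using $(q)_\infty = q^{-1/24}\eta(\tau)$. Since the nome at $-1/\tau = \tfrac{2\pi i}{t}$ equals $e^{-4\pi^2/t} \to 0$, this produces the standard estimate $\left(e^{-t}\right)_\infty \sim \sqrt{2\pi/t}\, e^{-\pi^2/(6t)}$, and hence
\begin{equation*}
\frac{1}{\left(e^{-t}\right)_\infty^{\ell^2-1}} \sim \left(\frac{t}{2\pi}\right)^{\frac{\ell^2-1}{2}} e^{\frac{(\ell^2-1)\pi^2}{6t}}.
\end{equation*}
Multiplying this against the asymptotics of $F_{\ell,s}(e^{-t})$ from Theorem \ref{main}, the powers of $t/(2\pi)$ add to $\left(1 - \tfrac{\ell^2}{2}\right) + \tfrac{\ell^2-1}{2} = \tfrac{1}{2}$, and the coefficients of $\pi^2/t$ in the exponent add to $-\tfrac{\ell^2-2\ell}{6} + \tfrac{\ell^2-1}{6} = \tfrac{2\ell-1}{6}$, yielding \eqref{asVs} exactly. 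The only genuine analytic input here is the $\eta$-asymptotic, which is classical; everything else is bookkeeping of the powers of $t$ and of $\pi^2/t$.

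For the quantum dimension I would note that $V = L_{\frak{g}}(-\Lambda_0) = V_0$ and that the leading term in \eqref{asVs} is manifestly independent of $s$, since both $\mathcal{C}_\ell$ and the exponential rate depend only on $\ell$. Thus the leading terms cancel in the ratio defining ${\rm qdim}(V_s)$, leaving the limit equal to $1$. The one point to watch is that \eqref{asVs} uses $q = e^{-t}$ whereas the definition of ${\rm qdim}$ evaluates at $\tau = it$, i.e.\ $q = e^{-2\pi t}$; these differ only by the harmless rescaling $t \mapsto 2\pi t$, which changes neither the cancellation nor the value of the limit, so indeed ${\rm qdim}(V_s) = 1$ for every $\ell$ and $s$.
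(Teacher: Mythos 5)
Your proposal is correct and follows essentially the same route as the paper: plug Theorem \ref{main} into \eqref{constantcoeff}, use the classical $\eta$-asymptotic \eqref{etaas} for the factor $(q)_\infty^{-(\ell^2-1)}$, track the powers of $t$ and the exponents, and deduce ${\rm qdim}(V_s)=1$ from the $s$-independence of \eqref{asVs}. Your explicit check of the $q=e^{-t}$ versus $\tau=it$ rescaling is a point the paper leaves implicit, but it does not change the argument.
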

In fact, the method used to prove Theorem \ref{main} should in principle lead to the full asymptotic expansion for any such character. In Proposition \ref{P:ell=3Asymp} below, we carry out the details for the case $\ell = 3$ below.

%{\bf AM: note to myself: explain how $L(-(s+1)\Lambda_0 + s \Lambda_1)$ and $L(-(s+1)\Lambda_0 + s \Lambda_{\ell-1})$ have different full characters but equal specialized characters.}

Now we recall Kac and Wakimoto's work in \cite[Section 1]{KW2}, which gives the full character as the coefficients of certain series. Let $\zeta_j :=e^{\alpha_j}$ and define
\begin{equation*}%\label{defineFlz}
F_{\ell}(\zeta_1,...,\zeta_\ell):=(q)_\infty \prod_{j=1}^\ell \prod_{k=1}^\infty \frac{1}{\left(1-\zeta_j^{-1} \cdots \zeta^{-1}_{\ell} q^{k}\right)\left(1-\zeta_j \cdots \zeta_{\ell} q^{k-1}\right)}.
\end{equation*}
Using the geometric series on every term, we expand this product as a formal series, to obtain a decomposition of the shape
\begin{equation*}
%\label{E:Flexpand}
F_{\ell}(\zeta_1,...,\zeta_\ell) =: \sum_{s \in \mathbb{Z}} F_{\ell,s}(\zeta_1,...,\zeta_{\ell-1}) \zeta_\ell^s.
\end{equation*}
In other words, $F_{\ell,s}(\zeta_1,...,\zeta_{\ell-1})$ is defined to be the $s$-th Fourier coefficient of $F_{\ell}$ with respect to $\zeta_{\ell}$.
In \cite{KW2}, Kac and Wakimoto showed that the $F_{\ell,s}$ are essentially the irreducible characters of $L_{\frak{g}}(-\Lambda_0)$-modules.
%{\bf AM: definition of the full character is now here.}
More precisely $(\zeta_j:=e^{2\pi iz_j})$
\begin{multline}
\label{E:F=fullch}
F_{\ell}(\zeta_1,...,\zeta_\ell)=\sum_{s=1}^\infty (\zeta_1 \cdots \zeta_\ell)^{s} q^{\frac{3s}{2}}  \widetilde{\rm ch}[-(1+s)\Lambda_0+s \Lambda_1](z;\tau)\\
+\sum_{s=0}^\infty \zeta_\ell^{-s} q^{-\frac{s}{2}} \widetilde{\rm ch}[-(1+s)\Lambda_0+s \Lambda_{\ell-1}](z;\tau ),
\end{multline}
where $\widetilde{{\rm ch}}[M](z;\tau)$ is the full character of $M$  and $\zeta=(\zeta_1,...,\zeta_{\ell-1})$.
Note that our definition  differs from \cite{KW2} due to the shift $\zeta_\ell \mapsto \zeta_\ell q^{\frac12}$, which affects the $q$-powers in \eqref{E:F=fullch}.

In Section 7 we prove the following decomposition result for $F_{\ell,s}(\zeta_1,\ldots,\zeta_{\ell-1})$.
\begin{theorem} \label{T:Fls}
Assume that $w_j:=-\sum_{k=j}^{\ell-1} z_k$ for $1 \leq j \leq \ell-1$, and $w_{\ell}=0$ are all distinct.
We have, for $|q|<|\zeta_j|^{\ell}<1$, $1\leq j\leq\ell-1$,
\begin{align*}
&F_{\ell,s}(\zeta_1,\ldots,\zeta_{\ell-1})= -i^{\ell+1}  q^{-h_s + \frac{c}{24}}\eta(\tau)^{\ell - 2} \prod_{j=1}^{\ell-1} \zeta_j^{\frac{js}{\ell}}  \sum_{\nu=1}^{\ell}\frac{\vartheta_{s-\frac{\ell}{2},\ell,\frac{\ell}{2}}^+\left(w_\nu-\frac{1}{\ell}\sum_{j=1}^{\ell}w_j \right)}
{\prod_{\substack{j=1\\j\neq\nu}}^{\ell} \vartheta\left(w_\nu-w_j\right)}.
	\end{align*}
where $\vartheta$ and $\vartheta_{s-\frac{\ell}{2},\ell,\frac{\ell}{2}}^+$ are certain theta and partial theta functions defined in \eqref{theta} and \eqref{ptheta}, respectively.
% introduced in Section 4.
\end{theorem}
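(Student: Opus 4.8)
The plan is to collapse the infinite product defining $F_\ell$ into a single quotient of Jacobi theta functions in the variable $\zeta_\ell$, and then to read off the $s$-th Fourier coefficient by the residue machinery for meromorphic Jacobi forms of negative index.

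First I would apply the Jacobi triple product to each of the $\ell$ inner products. Writing $u_j := z_j + \cdots + z_\ell = z_\ell - w_j$ (so that $w_j = -\sum_{k=j}^{\ell-1} z_k$ as in the statement, with $w_\ell = 0$), the product form $\vartheta(u;\tau) = -iq^{\frac18}e^{-\pi i u}(q)_\infty(e^{2\pi i u})_\infty(e^{-2\pi i u}q)_\infty$ turns each factor into $-iq^{\frac18}(q)_\infty e^{-\pi i u_j}/\vartheta(u_j)$. Collecting constants and powers of $q$ and $\eta$, and using $\sum_{j=1}^\ell u_j = \ell z_\ell - \sum_{j=1}^\ell w_j$, this yields
$$F_\ell(\zeta_1,\dots,\zeta_\ell) = (-i)^\ell q^{\frac{\ell}{8}-\frac{\ell+1}{24}}\,\eta(\tau)^{\ell+1}\,e^{\pi i\sum_{j}w_j}\,\zeta_\ell^{-\frac{\ell}{2}}\,\frac{1}{\prod_{j=1}^{\ell}\vartheta\left(z_\ell - w_j;\tau\right)}.$$
As a function of $z_\ell$ this is a meromorphic Jacobi form of index $-\frac\ell2$, and the distinctness hypothesis on the $w_\nu$ guarantees that its only poles are simple and located at $z_\ell\equiv w_\nu$.

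Next I would extract $F_{\ell,s}$, the coefficient of $\zeta_\ell^{s}$. The cleanest route is to invoke the Fourier-coefficient decomposition of negative-index meromorphic Jacobi forms of Bringmann--Rolen--Zwegers; alternatively, a self-contained argument integrates $F_\ell\,\zeta_\ell^{-s}$ over a horizontal period and shifts the contour by $\tau$, so that the index-$(-\frac\ell2)$ quasi-periodicity relates the shifted integral back to a coefficient of the same series while the difference collects the residues at $z_\ell = w_\nu$. Solving the resulting relation replaces the two-sided sum by a one-sided, or partial, theta series. At each pole the local expansion $\vartheta(z_\ell - w_\nu)\sim\vartheta'(0)(z_\ell - w_\nu)$ contributes a factor $1/\left(\vartheta'(0)\prod_{j\neq\nu}\vartheta(w_\nu - w_j)\right)$; since $\vartheta'(0)$ is a nonzero multiple of $\eta^{3}$, this turns $\eta^{\ell+1}$ into the $\eta^{\ell-2}$ of the statement and produces exactly the denominators $\prod_{j\neq\nu}\vartheta(w_\nu - w_j)$.

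Finally I would identify the one-sided series arising from the residues as the partial theta $\vartheta^+_{s-\frac{\ell}{2},\ell,\frac{\ell}{2}}$, with the recentered argument $w_\nu - \frac1\ell\sum_{j=1}^{\ell}w_j$ coming from the elliptic shift forced by the index $-\frac\ell2$, and then assemble all prefactors. Matching $q$-powers against $q^{-h_s+\frac{c}{24}}$ via $h_s=\frac{s^2}{2\ell}+\frac s2$ and $c=-(\ell+1)$, absorbing the half-integer shift noted after \eqref{E:F=fullch}, and recognizing the monomial $\prod_{j=1}^{\ell-1}\zeta_j^{\frac{js}{\ell}}$ from $e^{\pi i\sum_j w_j}$ together with $\zeta_\ell^{-\frac\ell2}$, delivers the claimed identity with overall constant $-i^{\ell+1}$. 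The main obstacle is precisely this last phase: carrying out the negative-index coefficient extraction so that the residue sum is recognized as the specific partial theta $\vartheta^+_{s-\frac{\ell}{2},\ell,\frac{\ell}{2}}$ at the right argument, while keeping every elliptic-transformation factor, half-integer shift, sign, and fractional $\zeta_j$-power under control so as to land exactly on the stated constants.
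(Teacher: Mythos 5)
Your proposal is correct and follows essentially the same route as the paper: the paper likewise uses the Jacobi triple product to rewrite $F_\ell$ as $(-i)^\ell q^{\frac{\ell}{8}}\prod_j\zeta_j^{-j/2}(q)_\infty^{\ell+1}$ times $(-1)^\ell/\prod_{j=1}^\ell\vartheta(z_\ell-w_j)$, and then extracts the $\zeta_\ell$-coefficient by the Bringmann--Rolen--Zwegers negative-index mechanism, concretely by computing $\int_{\partial P_{z_0}}\mathcal F_\ell(w)\,\vartheta^+_{r,\varepsilon,\frac{\ell}{2}}\bigl(w+\tfrac{1}{\ell}\sum_j jz_j\bigr)\,dw$ in two ways (elliptic shifts versus the Residue Theorem), which is exactly your contour-shift-and-solve argument packaged against the partial theta. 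Your residue bookkeeping, including $\vartheta'(0)=-2\pi\eta^3$ converting $\eta^{\ell+1}$ into $\eta^{\ell-2}$, the recentered argument $w_\nu-\tfrac{1}{\ell}\sum_j w_j$, and the matching of $q^{-h_s+\frac{c}{24}}$ and $\prod_j\zeta_j^{js/\ell}$ via $r=s-\tfrac{\ell}{2}$, agrees with the paper's computation.
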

Kac and Wakimoto \cite{KW2} recently obtained a Weyl-Kac type formula for the character ${\rm ch}[-(1+s)\Lambda_0+s \Lambda](z;\tau)$. Unlike our
Theorem \ref{T:Fls}, where the whole character splits into rank one pieces, their formula involves a summation over over the half-space of the root lattice of $\frak{sl}_{\ell}$ (see also \cite{BCR}). Although very elegant, we found their result more difficult to use for the purpose of computing asymptotic expansions.
%{\bf TODO: add something on Theorem 7.1}
%{\bf KM: I disagree with removing the ``To-Do'' comment here. If we are not going to say anything about Theorem 7.1, then why are we discussing the general characters in the intro? I would %be fine with the alternative of moving everything from \eqref{defineFlz} to Section 7. AM: sure, we are going to mention Theorem 7.1. KM: That is fine with me, but then I am confused by the %comments from a files ago -- it seemed that you and Kathrin agreed to not say anything more (in which case everything should be moved to Section 7).}

\subsection{Organization  of the paper}
The paper is organized as follows. In Section \ref{sec:pre} we recall basic facts about Bernoulli numbers and their generalizations, theta functions, and the Euler-Maclaurin summation formula. We then study in Section \ref{sec:asympaux} the asymptotic behaviour of certain auxiliary functions. Section \ref{sec:qseries} decomposes $F_{\ell,s}$ in terms of partial theta functions and quasimodular forms. In Section \ref{sec:mainproof}, we then finish the proof of Theorem \ref{main} and Corollary \ref{cor:intro}. Section \ref{sec:caseell3} treats the case $\ell=4$ in which we find a full asymptotic expansion for the corresponding characters. We prove a decomposition formula of $F_{\ell,s}(\zeta_1,\ldots,\zeta_{\ell-1})$ in Section \ref{sec:decomp}. We finish the paper in Section 8 by computing modular transformation properties of the full character $F_{\ell,s}$ from modular properties of unary partial and false theta functions, which we also derive there.

\section*{Acknowledgments}
We thank Victor Kac for raising the question of computing asymptotic expansion of characters.

The research of the  first author is supported by the Alfried Krupp Prize for Young University Teachers of the Krupp foundation and the research leading to these results receives funding from the European Research Council under the European Union's Seventh Framework Programme (FP/2007-2013) / ERC Grant agreement n. 335220 - AQSER. The research of the third author was supported by a Simons Foundation Collaboration Grant ($\#$ 317908) and NSF-DMS Grant 1601070.

\section{Preliminaries}\label{sec:pre}
\subsection{Bernoulli number generating functions} Recall that the {\it Bernoulli numbers} $B_k$ are defined via their generating function
\begin{equation}\label{Bernoulli}
\frac{w}{e^w-1}=:\sum_{k=0}^{\infty}B_k\frac{w^k}{k!}.\qquad(|w|<2\pi)
\end{equation}
We also require the following modified generating function, again assuming $|w|<2\pi$,
\begin{equation}\label{S}
S(w) := \sum_{k = 1}^\infty \frac{B_{2k} w^{2k}}{2k (2k)!}.
\end{equation}
Differentiating \eqref{S} and using \eqref{Bernoulli} gives that
	\begin{equation*}
	S'(w)= \frac{e^{-w}}{1 - e^{-w}} - \frac{1}{w} + \frac12
	\end{equation*}
and hence, by taking the limit on both sides,
\begin{equation}\label{Bernid}
	S(w) = \operatorname{Log}\left(\frac{e^{w}-1}{w}\right) - \frac{w}{2}.
\end{equation}

	We also need generalizations of Bernoulli numbers. For this, denote by $B_n^{(r)}(x)$ the {\it Bernoulli polynomials of order} $r\in\N$, defined via their generating function
	\begin{equation*}\label{Berpol}
	\left(\frac{w}{e^w-1}\right)^r e^{xw} =: \sum_{n=0}^\infty B_n^{(r)}(x) \frac{w^n}{n!}.
	\end{equation*}
For $r=1$ this recovers the ordinary Bernoulli polynomials $B_n(x)$.

We also require the \emph{Euler polynomials} defined through the generating function
\begin{equation*}
\frac{2e^{xw}}{e^w+1} =: \sum_{n=0}^{\infty} \frac{E_n(x)}{n!}w^n.
\end{equation*}
The \emph{Euler numbers} are in particular given by
\begin{equation}\label{Euler12}
E_n = 2^n E_n\left(\frac12\right).
\end{equation}
The following identity relates Euler and Bernoulli polynomials ($n\in\N, m\in 2\N$)
\begin{equation}\label{EB}
E_n(mx) = -\frac{2}{n+1} m^n \sum_{k=0}^{m-1} (-1)^k B_{n+1}\left(x+\frac{k}{m}\right).
\end{equation}

\subsection{Theta functions}
Let
\begin{equation}
\label{theta}
\vartheta(z)=\vartheta(z;\tau):=\sum_{n\in\frac12+\mathbb Z}q^{\frac{n^2}{2}}e^{2\pi in\left(z+\frac12\right)},\  \eta(\tau):=q^{\frac{1}{24}}(q)_{\infty}.
\end{equation}
By the Jacobi triple product identity, we have the product expansion
\begin{equation*}
%\label{E:thetaprod}
\vartheta(z;\tau)=-iq^{\frac{1}{8}}\zeta^{-\frac12}(q)_{\infty}(\zeta)_{\infty}(\zeta^{-1}q)_{\infty}.
\end{equation*}
Recall the transformations ($\lambda,\mu\in\Z$, $\left(\begin{smallmatrix}
a & b \\ c & d
\end{smallmatrix}\right)\in\SL_2(\Z)$)
\begin{align}\label{thetatran}
\vartheta(z+\lambda \tau + \mu) &= (-1)^{\lambda+\mu} q^{-\frac{\lambda^2}{2}} \zeta^{-\lambda} \vartheta(z),\\
\vartheta\left(\frac{z}{c\tau+d};\frac{a\tau+b}{c\tau+d}\right)&=\chi\left(\begin{matrix}
a & b \\ c & d
\end{matrix}\right) (c\tau+d)^{\frac12} e^{\frac{\pi i cz^2}{c\tau+d}}\vartheta(z;\tau),\notag
\end{align}
where $\chi$ is the multiplier of $\eta^3$. In particular
\begin{align}
\vartheta\left(\frac{z}{\tau};-\frac{1}{\tau}\right)&=-i\sqrt{-i\tau}e^{\frac{\pi iz^2}{\tau}}\vartheta(z;\tau),\qquad
\eta\left(-\frac{1}{\tau}\right)=\sqrt{-i\tau}\eta(\tau).\label{etatran}
\end{align}

Next recall that
\begin{equation}\label{thetatay}
\vartheta(z;\tau)=-2\pi z\cdot\eta(\tau)^3\cdot\exp\left(-\sum_{k=1}^\infty \frac{G_{2k}(\tau)}{2k}z^{2k}\right),
\end{equation}
where for $k\in\N$
\begin{equation}
\label{E:G2k}
G_{2k}(\tau):= -\frac{ (2\pi i)^{2k} B_{2k}}{(2k)!} + \frac{2 (2\pi i)^{2k}}{(2k-1)!} \sum_{n = 1}^\infty \sigma_{2k - 1}(n) q^n.
\end{equation}
Here $\sigma_\ell(n):=\sum_{d\mid n}d^\ell$ is the {\it $\ell$-th divisor sum}.
We also require the transformations $(k>1)$
\begin{align}\label{Eistran}
G_{2k}(\tau)=\tau^{-2k}G_{2k}\left(-\frac{1}{\tau}\right)
\qquad G_2(\tau)=\tau^{-2}G_{2}\left(-\frac{1}{\tau}\right) +\frac{2\pi i}{\tau}.
\end{align}
\subsection{Euler-Maclaurin summation formula}
The Euler-Maclaurin summation formula (see e.g. \cite{Za}, correcting a sign error) implies that for $\alpha\in\R$ and $f:\R\to \R$ a $C^\infty$-function such that $f$ and all its derivatives are of rapid decay at infinity, we have
\begin{equation}\label{EM}
\sum_{n= 0}^\infty f((n+\alpha)t) = \frac{I_f}{t} - \sum_{n= 0}^N \frac{B_{n+1}(\alpha)}{(n+1)!} f^{(n)}(0)t^n+O\left(t^{N+1}\right)\quad (t\to 0^+),
\end{equation}
where $B_\ell(x)$ denotes the $\ell$-th Bernoulli polynomial and $I_f:=\int_0^\infty f(x)dx$.

\section{Asymptotics for auxilary functions}\label{sec:asympaux}
We next determine the asymptotic behaviour of two partial theta functions. These expansions are well-known to experts; however, for the convenience of the reader, we provide proofs.

The first function we require is (here $j\in\N_0$, $r\in\mathbb Q$),
\begin{equation}\label{defineF}
\mathcal{F}_{j,r}(t):=2^{-2j}t^j\sum_{n= 0}^\infty(-1)^n \left(n+r\right)^{2j} e^{-\frac14\left(n+r\right)^2 t}.
\end{equation}
Then
\begin{align*}
\mathcal{F}_{j,r}(t)=\sum_{n= 0}^\infty\left(\mathbb F_j\left(\left(n+\frac{r}{2}\right)\sqrt{t}\right)-\mathbb F_j\left(\left(n+\frac{r+1}{2}\right)\sqrt{t}\right)\right),
\end{align*}
where
$
\mathbb F_j(x):=x^{2j}e^{-x^2}.
$
By \eqref{EM}, we obtain
\begin{equation}\label{fullF}
\mathcal{F}_{j,r}(t)=-\sum_{n= 0}^N\frac{B_{n+1}\left(\frac{r}{2}\right)-B_{n+1}\left(\frac{r+1}{2}\right)}{(n+1)!}\mathbb F_j^{(n)}(0)t^{\frac{n}{2}}+O\left(t^{\frac{N+1}{2}}\right).
\end{equation}
Noting that $\mathbb F_j$ is an even function, we compute
\begin{align}
\mathcal F_{j,r}(t)=-\sum_{n=0}^N \frac{B_{2n+2j+1}\left(\frac{r}{2}\right) - B_{2n+2j+1}\left(\frac{r+1}{2}\right)}{(2n+2j+1)n!} (-1)^n t^{n+j}+O\left(t^{N+j+1}\right).\label{fullF2}
\end{align}
This implies that
\begin{align}\label{Fas}
\mathcal F_{j,r}(t) \sim -\frac{B_{2j+1}\left(\frac{r}{2}\right)- B_{2j+1}\left(\frac{r+1}{2}\right)}{2j+1}  t^j.
\end{align}
In particular,
\begin{equation}\label{Fas2}
\mathcal{F}_{0,r}(t) \sim \frac 12.
\end{equation}

We next study, for $j \in \mathbb{N}$, $r\in\mathbb Q$,
\begin{equation*}
\mathcal G_{j,r}(t) := t^{j-\frac12} \sum_{n = 0}^\infty \left(n+r\right)^{2j-1} e^{-(n+r)^2t}.
\end{equation*}
Then
\begin{equation*}
\mathcal G_{j,r}(t) = \sum_{n= 0}^\infty \mathbb G_j\left(\left(n+r\right)\sqrt{t}\right),
\end{equation*}
where  $\mathbb G_j(x):=x^{2j-1}e^{-x^2}$. By \eqref{EM} and the fact that $I_{\mathbb G_j} = \frac{(j-1)!}{2}$, we obtain that
\begin{equation}\label{Gas}
\mathcal G_{j,r}(t) = \frac{I_{\mathbb G_j}}{\sqrt{t}} - \sum_{n= 0}^N \frac{B_{n+1}\left(r\right)}{(n+1)!} \mathbb G_j^{(n)}(0) t^{\frac{n}{2}}+O\left(t^{\frac{N+1}{2}}\right) \sim \frac{(j-1)!}{2\sqrt{t}}.
\end{equation}

\section{ A $q$-series for $F_{\ell,s}$}\label{sec:qseries}
To prove Theorem \ref{main}, we first find a $q$-series representation for $F_{\ell,s}$. For this, we use a theorem from \cite{BRZ}. To state it, we let, for $z_0\in\C$,
	\[
	P_{z_0}:=z_0+[0, 1)+\tau[0, 1).
	\]
		Let $\phi$  be a meromorphic function  satisfying ($\lambda, \mu\in\Z, \, \varepsilon \in \{0, 1\}$, $m\in-\frac12\N$)
		\begin{equation}\label{elliptic}
		\phi(z+\lambda \tau+\mu)=(-1)^{2m \mu+\lambda\varepsilon}
		e^{-2\pi im\left(\lambda^2\tau+2\lambda z\right)}\phi(z).
		\end{equation}
	Moreover $S_{z_0}(\tau)$ is the complete  set of poles of $\phi$ in $P_{z_0}$. 	 Furthermore $\mathcal{D}_z:=\frac1{2\pi i}\frac{\partial}{\partial z}$, and for $M\in\frac12\N$, $r \in M+\Z, \, \varepsilon \in \{0,1\}$, define the partial/false theta functions
	\begin{equation}
	\label{ptheta}
	\vartheta_{r,\varepsilon,M}^+(z)=\vartheta_{r, \varepsilon, M}^+(z; \tau):=\sum_{n= 0}^\infty(-1)^{n \varepsilon}\zeta^{2Mn-r} q^{\frac1{4M}(2Mn-r)^2}.
	\end{equation}
For $r\in m+\mathbb Z$, let $h_{r,z_0}(\tau)$ be the renormalized $r$-th Fourier coefficient with respect to $z_0$
\begin{equation}
\label{E:hr}
h_r(\tau) = h_{r,z_0}(\tau) := q^{-\frac{r^2}{4m}} \int_{z_0}^{z_0+1} \phi(z) e^{-2\pi ir z} dz.
\end{equation}	
If $\phi$ has any poles on the line from $z_0$ to $z_0 + 1$, then the contour must be deformed. In particular, if there is a pole that is not an endpoint, then we define the path to be the average of the paths deformed to pass above and below the pole. If there is a pole at an endpoint, we replace the path by $[z_0-\delta, z_0+1-\delta]$ with $\delta$ small, so that there is no pole at an endpoint.
We now recall the following Theorem from \cite{BRZ} (slightly rewritten).
\begin{theorem}\label{hltheorem}
 Assume that $\phi$  satisfies \eqref{elliptic} and choose $z_0\in\C$, such that $\phi$ has no poles on the boundary of $P_{z_0}$.
	Then we have, for any $r\in m+\Z$,
\begin{equation}\label{reph}
h_{r,z_0}(\tau) = 2\pi i\sum_{z_s\in S_{z_0}(\tau)} \underset{\ z=z_s}{\rm Res}\left( \phi(z,\tau) \vartheta_{r,\varepsilon,-m}^+(z;\tau)\right).
\end{equation}
\end{theorem}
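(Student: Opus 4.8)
The plan is to obtain \eqref{reph} from a single application of the residue theorem on the parallelogram $P_{z_0}$, followed by iterating the resulting functional equation in the index $r$ and controlling a boundary term. Set $g(z):=\phi(z)e^{-2\pi irz}$ and write $\widetilde h_r(\tau):=\int_{z_0}^{z_0+1}\phi(z)e^{-2\pi irz}\,dz$ for the unnormalized coefficient, so that $h_{r,z_0}=q^{-r^2/(4m)}\widetilde h_r$ by \eqref{E:hr}. Since $\phi$ has no poles on $\partial P_{z_0}$, integrating $g$ once counterclockwise around $\partial P_{z_0}$ gives $\oint_{\partial P_{z_0}}g\,dz=2\pi i\sum_{z_s\in S_{z_0}(\tau)}{\rm Res}_{z=z_s}g$, where the bottom edge is exactly $\widetilde h_r$.

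First I would evaluate the four edges using the functional equation \eqref{elliptic}. Taking $\lambda=0$, $\mu=1$ and using $r\in m+\Z$, one finds $(-1)^{2m}e^{-2\pi ir}=1$, so the two vertical edges cancel. Taking $\lambda=1$, $\mu=0$ and substituting $z\mapsto z+\tau$ shows that the top edge equals $-(-1)^\varepsilon q^{-m-r}\widetilde h_{r+2m}$; the factor $e^{-4\pi imz}$ in \eqref{elliptic} is what shifts the frequency from $r$ to $r+2m$. Thus $\widetilde h_r-(-1)^\varepsilon q^{-m-r}\widetilde h_{r+2m}=2\pi i\sum_{z_s}{\rm Res}_{z=z_s}g$. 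Renormalizing by $q^{-r^2/(4m)}$ and using the identity $(r+2m)^2/(4m)=r^2/(4m)+r+m$, all stray $q$-powers cancel and I obtain the clean recursion $h_r-(-1)^\varepsilon h_{r+2m}=2\pi i\sum_{z_s}{\rm Res}_{z=z_s}(\phi(z)\,\zeta^{-r}q^{-r^2/(4m)})$.

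The key observation is that $\zeta^{-r}q^{-r^2/(4m)}$ is precisely the $n=0$ summand of $\vartheta^+_{r,\varepsilon,-m}$ in \eqref{ptheta}, and replacing $r$ by $r+2nm$ turns the bracketed factor into the $n$-th summand $T_n$ (up to the sign $(-1)^{n\varepsilon}$). I would then apply the recursion at index $r+2nm$, multiply by $(-1)^{n\varepsilon}$ so that the signs combine into $2\pi i\sum_{z_s}{\rm Res}_{z=z_s}(\phi T_n)$, and sum over $0\le n\le N-1$. The left-hand side telescopes to $h_r-(-1)^{N\varepsilon}h_{r+2Nm}$, while the right-hand side is $2\pi i\sum_{z_s}{\rm Res}_{z=z_s}(\phi\sum_{n=0}^{N-1}T_n)$. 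Letting $N\to\infty$, the partial theta series converges locally uniformly in $z$ (the Gaussian $q$-power beats the exponential $\zeta$-factor), so the residues converge to ${\rm Res}_{z=z_s}(\phi\,\vartheta^+_{r,\varepsilon,-m})$; the outer sum is finite since $\phi$ has only finitely many poles in $P_{z_0}$.

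The main obstacle is to show that the boundary term vanishes, i.e.\ $\lim_{N\to\infty}(-1)^{N\varepsilon}h_{r+2Nm}=0$, and this is exactly where the renormalization in \eqref{E:hr} is essential. Writing $v:=\im(\tau)>0$ and recalling $m<0$, the normalizing factor has modulus $|q|^{-(r+2Nm)^2/(4m)}=e^{\pi v(r+2Nm)^2/(2m)}$, which decays like a Gaussian in $N$. On the other hand, along the fixed bottom edge $|\widetilde h_{r+2Nm}|\le e^{2\pi(r+2Nm)\im(z_0)}\max_{[z_0,z_0+1]}|\phi|$ grows at most exponentially in $N$. Since Gaussian decay dominates exponential growth, $h_{r+2Nm}\to0$, which yields $h_{r,z_0}=2\pi i\sum_{z_s\in S_{z_0}(\tau)}{\rm Res}_{z=z_s}(\phi(z)\,\vartheta^+_{r,\varepsilon,-m}(z;\tau))$ as claimed. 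I expect the only delicate points beyond this estimate to be the careful bookkeeping of signs and $q$-powers in the functional equation and the justification of interchanging the limit with the residue.
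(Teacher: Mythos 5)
Your proof is correct, but it runs in a different gear than the argument the paper relies on. The paper quotes Theorem \ref{hltheorem} from \cite{BRZ}, and its in-house version of that argument (the proof of Theorem \ref{T:Fls} in Section \ref{sec:decomp}) applies the residue theorem \emph{once} to the product $\phi(z)\,\vartheta^+_{r,\varepsilon,-m}(z)$ on $\partial P_{z_0}$: the elliptic transformation \eqref{elliptic} of $\phi$ combines with the quasi-periodicity of the partial theta function under $z\mapsto z+\tau$ (the shift $n\mapsto n+1$ inside \eqref{ptheta} absorbs all but the $n=0$ term), so the top and bottom edges collapse directly to the normalized Fourier integral \eqref{E:hr}, the vertical edges cancel, and \eqref{reph} drops out with no limiting process. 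You instead apply the residue theorem to the single mode $\phi(z)e^{-2\pi irz}$, obtain the two-term recursion $h_r-(-1)^{\varepsilon}h_{r+2m}=2\pi i\sum_{z_s}\operatorname{Res}_{z=z_s}\bigl(\phi(z)\,\zeta^{-r}q^{-\frac{r^2}{4m}}\bigr)$, and reconstruct $\vartheta^+_{r,\varepsilon,-m}$ by telescoping over $r\mapsto r+2nm$. This is the paper's argument ``unrolled'': what it buys you is that only the elliptic transformation of $\phi$ is ever used---the functional equation of the partial theta never has to be invoked---and the identity $\frac{(r+2m)^2}{4m}=\frac{r^2}{4m}+r+m$ makes transparent why the renormalization in \eqref{E:hr} is the right one. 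The price is the extra analytic step, which you correctly supply: the tail $(-1)^{N\varepsilon}h_{r+2Nm}$ must vanish, and it does because $\bigl|q^{-\frac{(r+2Nm)^2}{4m}}\bigr|$ decays like a Gaussian in $N$ (recall $m<0$) while $|\widetilde{h}_{r+2Nm}|\le e^{2\pi(r+2Nm)\operatorname{Im}(z_0)}\max_{[z_0,z_0+1]}|\phi|$ grows at most exponentially; the interchange of limit and residue is justified, as you note, by locally uniform convergence of the partial sums of \eqref{ptheta} on small circles around the finitely many poles in $P_{z_0}$. Your sign and $q$-power bookkeeping all checks out, including the cancellation of the vertical edges from $(-1)^{2m}e^{-2\pi ir}=1$ for $r\in m+\Z$.
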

We use Theorem \ref{hltheorem} to find a $q$-series for $F_{\ell,s}$.
\begin{theorem}
\label{T:F=Dj}
	We have, for $0<y<v$,
	\begin{equation*}
	F_{\ell,s}(q) = i^\ell (q)^{\ell^2-2\ell}_{\infty} q^{-h_s-\frac{\ell}{8}}\sum_{\substack{1\leq j \leq \ell \\ j\equiv \ell\pmod{2}}} \frac{D_{-j}(\tau)}{(j-1)!}\sum_{n= 0}^\infty(-1)^{n\ell}\left(\ell n+\frac{\ell}{2}-s\right)^{j-1}q^{\frac{1}{2\ell}\left(\ell n+\frac{\ell}{2}-s\right)^2},
	\end{equation*}
	where the $D_{-j}(\tau)$ are the Laurent coefficients of $g_\ell$ in $z$ (expanded around $0$), so that
	\begin{equation}
	\label{E:gDdef}
	g_\ell(z;\tau) = \sum_{j = 1}^\ell \frac{D_{-j}(\tau)}{(2\pi i z)^j} + O(1),
	\end{equation}
\end{theorem}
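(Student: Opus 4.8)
The plan is to realize $F_{\ell,s}(q)$ as a single Fourier coefficient of a meromorphic Jacobi form of negative index and then apply the residue formula of Theorem \ref{hltheorem}. By definition, $F_{\ell,s}(q)$ equals $(q)_\infty^{\ell^2-1}$ times the coefficient of $\zeta^s$ in
\[
\phi(z;\tau) := \frac{(q)_\infty}{(\zeta)_\infty^\ell\left(\zeta^{-1}q\right)_\infty^\ell},
\]
that is, the $s$-th Fourier coefficient of $\phi$ in $z$ (with $\zeta=e^{2\pi iz}$). Using the Jacobi triple product form of $\vartheta$ from \eqref{theta}, I would rewrite $\phi$ in closed form as a constant multiple of $\zeta^{-\frac\ell2}q^{\frac\ell8}(q)_\infty^{\ell+1}\vartheta(z)^{-\ell}$, exhibiting $\phi$ as a meromorphic function whose only singularities are poles of order $\ell$ at the lattice points $z\in\Z\tau+\Z$.

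The next step is to put this into the exact shape demanded by \eqref{elliptic}. Since $\phi$ itself fails the quasi-periodicity by a stray power of $q$, I would pass to $\widetilde\phi(z):=\zeta^{\frac\ell2}\phi(z)=(-i)^\ell q^{\frac\ell8}(q)_\infty^{\ell+1}\vartheta(z)^{-\ell}$ and verify, via the theta transformation \eqref{thetatran}, that $\widetilde\phi$ satisfies \eqref{elliptic} with index $m=-\tfrac\ell2$ and multiplier datum $\varepsilon\equiv\ell\pmod 2$. Multiplying by $\zeta^{\frac\ell2}$ shifts Fourier indices by $\tfrac\ell2$, so the $\zeta^s$-coefficient of $\phi$ is the $\left(s+\tfrac\ell2\right)$-th coefficient of $\widetilde\phi$. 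Choosing $z_0$ compatibly with the range $0<y<v$ so that exactly one pole (at $z\equiv0$) lies in $P_{z_0}$, I would apply \eqref{reph} and undo the renormalization $q^{-r^2/(4m)}$ in \eqref{E:hr}; with $r=s+\tfrac\ell2$ and $m=-\tfrac\ell2$ one checks that $r^2/(4m)=-h_s-\tfrac\ell8$, so this coefficient equals $q^{-h_s-\frac\ell8}$ times $2\pi i$ times the residue of $\widetilde\phi(z)\,\vartheta^+_{r,\varepsilon,\frac\ell2}(z)$ at $z=0$.

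The heart of the argument is the residue. I would identify the function of \eqref{E:gDdef} as the normalization $g_\ell:=(-1)^\ell\eta^{3\ell}\vartheta^{-\ell}$, equivalently $\widetilde\phi=i^\ell(q)_\infty^{1-2\ell}g_\ell$, and read off its principal part from the Taylor expansion \eqref{thetatay}: because $\vartheta(z)/z$ is even, $g_\ell$ has a Laurent expansion $\sum_{j=1}^\ell D_{-j}(2\pi iz)^{-j}+O(1)$ supported only on $j\equiv\ell\pmod2$, which is exactly the parity restriction in the sum and realizes the $D_{-j}$ as polynomials in the quasimodular Eisenstein series $G_{2k}$. The factor $\vartheta^+_{r,\varepsilon,\frac\ell2}(z)=\sum_{n\ge0}(-1)^{n\varepsilon}\zeta^{\ell n-r}q^{\frac1{2\ell}(\ell n-r)^2}$ is holomorphic at $z=0$, and with the parameter dictated by the Fourier index its Taylor coefficients contribute $\frac{(2\pi i)^{j-1}}{(j-1)!}\sum_n(-1)^{n\ell}\left(\ell n+\tfrac\ell2-s\right)^{j-1}q^{\frac1{2\ell}(\ell n+\frac\ell2-s)^2}$, using $(-1)^{n\varepsilon}=(-1)^{n\ell}$. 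Extracting the coefficient of $z^{-1}$ pairs the $z^{-j}$ term of $g_\ell$ with the $z^{j-1}$ term of $\vartheta^+$, producing the double sum; the factors $D_{-j}(2\pi i)^{-j}$ against $(2\pi i)^{j-1}$ leave a single $2\pi i$ that the prefactor absorbs, and combining $(q)_\infty^{\ell^2-1}$ with $\widetilde\phi=i^\ell(q)_\infty^{1-2\ell}g_\ell$ yields the claimed $i^\ell(q)_\infty^{\ell^2-2\ell}$.

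The main obstacle I anticipate is the bookkeeping in this last step. Concretely, one must reconcile the raw partial-theta parameter $r=s+\tfrac\ell2$ with the summand argument $\ell n+\tfrac\ell2-s$ appearing in the statement, i.e. correctly implement the contour and pole conventions of Theorem \ref{hltheorem} in the range $0<y<v$ so that no reflected or boundary term is introduced; and one must track every constant ($i^\ell$, the sign $(-1)^{n\ell}$, the factorials $(j-1)!$, and the precise interplay of $q^{-h_s-\frac\ell8}$ with the theta exponent) so that the normalization lands exactly as stated. By contrast, verifying \eqref{elliptic} for $\widetilde\phi$ and the parity support of the $D_{-j}$ are routine comparisons with $\vartheta$; it is the residue and normalization accounting where errors are easiest to make.
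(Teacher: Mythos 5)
Your overall strategy is the paper's: realize $F_{\ell,s}$ as a renormalized Fourier coefficient of $g_\ell=\eta^{3\ell}\vartheta^{-\ell}$ (index $m=-\frac{\ell}{2}$, $\varepsilon\equiv\ell\pmod 2$), apply Theorem \ref{hltheorem}, and read the residue off the Laurent coefficients $D_{-j}$; your normalization bookkeeping ($\widetilde\phi=\zeta^{\frac{\ell}{2}}\phi$, the identity $\frac{r^2}{4m}=-h_s-\frac{\ell}{8}$ with $r=s+\frac{\ell}{2}$, the parity support of the $D_{-j}$) is correct. But there is a genuine gap at exactly the step you flagged as ``bookkeeping'': your pole placement is inconsistent with the range $0<y<v$. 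The Fourier coefficient in that range is an integral over a horizontal segment at height $\operatorname{Im}(z_0)\in(0,v)$, and then $P_{z_0}=z_0+[0,1)+\tau[0,1)$ covers heights in $(\operatorname{Im}(z_0),\operatorname{Im}(z_0)+v)$, which contains the pole at $z\equiv\tau$, \emph{not} the pole at $z\equiv 0$. Choosing $z_0$ so that $0\in P_{z_0}$ forces $-v<\operatorname{Im}(z_0)<0$, i.e.\ you would be computing the Fourier expansion valid in $-v<y<0$, a different expansion because of wall-crossing across the poles on the real line. Concretely, the direct residue at $z=0$ gives Taylor coefficients of $\vartheta^+_{s+\frac{\ell}{2},\varepsilon,\frac{\ell}{2}}(z)$ at $z=0$, whose summands carry $\left(\ell n-\frac{\ell}{2}-s\right)^{j-1}q^{\frac{1}{2\ell}\left(\ell n-\frac{\ell}{2}-s\right)^2}$; re-indexing $n\mapsto n+1$ shows this differs from the stated formula by precisely one extra term, namely $-i^\ell(q)_\infty^{\ell^2-2\ell}\sum_{j\equiv\ell\ (2)}\frac{D_{-j}(\tau)}{(j-1)!}\left(s+\frac{\ell}{2}\right)^{j-1}$, which is nonzero. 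So the reconciliation you hoped for (``no reflected or boundary term is introduced'') does not happen with your contour; your claimed Taylor contribution with argument $\ell n+\frac{\ell}{2}-s$ is asserted, not derived, and is false for the residue at $z=0$.

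The missing idea, which is how the paper resolves it, is an elliptic shift: keep the pole at $z=\tau$ (as $0<y<v$ demands) and transport the Laurent expansion there via
\begin{equation*}
g_\ell(z)=(-1)^\ell\zeta^{\ell}q^{-\frac{\ell}{2}}\,g_\ell(z-\tau),
\end{equation*}
so that the residue at $z=\tau$ becomes $\sum_{j}\frac{D_{-j}(\tau)}{(j-1)!}\left[\mathcal D_z^{j-1}\left(\zeta^{\ell}\vartheta^+_{s+\frac{\ell}{2},\varepsilon,\frac{\ell}{2}}(z)\right)\right]_{z=\tau}$, with the factor $\zeta^{\ell}$ differentiated \emph{along with} $\vartheta^+$. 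It is this $\zeta^\ell$ that shifts the exponent to $\ell n+\frac{\ell}{2}-s$, produces (upon evaluating $\zeta=q$ at $z=\tau$) a factor $q^{\frac{\ell}{2}}$ cancelling the $q^{-\frac{\ell}{2}}$ from the shift, and supplies the $(-1)^\ell$ converting the prefactor $(-i)^\ell$ of \eqref{E:F=h} into the stated $i^\ell$ --- note also that your renormalization $g_\ell:=(-1)^\ell\eta^{3\ell}\vartheta^{-\ell}$ silently changes the $D_{-j}$ of \eqref{E:gDdef} by $(-1)^\ell$, whereas with the paper's unsigned $g_\ell$ the sign emerges from the elliptic shift. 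With the pole corrected to $z=\tau$ and this shift inserted, the rest of your outline goes through as written.
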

\begin{proof}
To apply \eqref{reph}, we rewrite
$$
F_{\ell,s}(q) = (q)_\infty^{\ell^2 - 1} \coeff_{\zeta^s}\frac{(q)_\infty }{(\zeta)_\infty ^\ell \left(\zeta^{-1}q\right)_\infty ^\ell}=(q)_\infty^{\ell^2 - 2\ell} f_{\ell,s}(\tau)
$$
where
%$$
%f_\ell(\tau)=(-i)^\ell \eta(\tau)^{3\ell}\operatorname{coeff}_{\zeta^{-1}} g_\ell(z;\tau),
%$$
%{\bf KM: Typo with $\zeta$ exponent:}
\begin{equation}\label{frog}
f_{\ell,s}(\tau):=(-i)^\ell \operatorname{coeff}_{\zeta^{s+\frac{\ell}{2}}} g_\ell(z;\tau),
\end{equation}
with
$$
g_\ell(z)=g_\ell(z;\tau):=\frac{\eta(\tau)^{3\ell}}{\vartheta(z;\tau)^\ell}.
$$
Note that $g_\ell$ is a Jacobi form of weight $\ell$ and index $m = -\frac{\ell}{2}$. In particular we have the elliptic transformation
($\lambda,\mu\in\mathbb Z$)
\begin{equation}\label{elliptictran}
g_\ell(z+\lambda\tau+\mu)=(-1)^{(\lambda +\mu)\ell}\zeta^{\lambda\ell}q^{\frac{\ell\lambda^2}{2}}g_\ell(z).
\end{equation}
Thus \eqref{elliptic} holds with $\varepsilon\equiv\ell\pmod{2}$.
Moreover, for $\begin{psmallmatrix}
	a & b\\c & d
\end{psmallmatrix}\in\text{SL}_2(\mathbb Z)$ we require the modular transformation
\begin{equation*}
g_{\ell}\left(\frac{z}{c\tau+d};\frac{a\tau+b}{c\tau+d}\right)=(c\tau+d)^{\ell}e^{-\frac{\pi i\ell cz^2}{c\tau+d}}g_\ell(z;\tau).
\end{equation*}

To rewrite \eqref{frog},
we use \eqref{reph}, to obtain
\[
H_{s+\frac{\ell}{2}}(\tau)=2\pi i \cdot \underset{z=\tau}{\text{Res}}\left(g_\ell(z)\vartheta^+_{s+\frac{\ell}{2},\varepsilon,\frac{\ell}{2}}(z;\tau)\right),
\]
where $H_j(\tau)$ denotes the $j$-th renormalized $h_{j,z_0}(\tau)$ (see \eqref{E:hr}) Fourier coefficient of $g_\ell(z;\tau)$ in the fixed range $0<y<v$, and thus $z_0$ is chosen such that it satisfies the conditions of Theorem \ref{hltheorem} with $\tau \in P_{z_0}$.
Using \eqref{elliptictran} with $\lambda=1$ and $\nu=0$ (and $z \mapsto z - \tau$), we directly obtain that
$$
g_\ell(z) =
 (-1)^\ell \zeta^{\ell} q^{-\frac{\ell}{2}} g_{\ell}(z-\tau).
$$
Plugging in, we find that
\begin{align*}
H_{s+\frac{\ell}{2}}(\tau) & = (-1)^{\ell}q^{-\frac{\ell}{2}}2\pi i  \underset{z=\tau}{\text{Res}}\left(g_\ell(z-\tau;\tau)\zeta^{\ell}\vartheta^+_{s+\frac{\ell}{2},\varepsilon,\frac{\ell}{2}}(z;\tau)\right) \\
& =(-1)^{\ell}q^{-\frac{\ell}{2}}\sum_{\substack{j=1 \\ j\equiv \ell\pmod{2}}}^{\ell}\frac{D_{-j}(\tau)}{(j-1)!}\left[\mathcal D_z^{j-1}\left(\zeta^{\ell}\vartheta^+_{s+\frac{\ell}{2},\varepsilon,\frac{\ell}{2}}(z;\tau)\right)\right]_{z=\tau}.
\end{align*}
Computing the derivative
\begin{align*}
&\left[\mathcal D_z^{j-1}\left(\zeta^{\ell}\vartheta^+_{s+\frac{\ell}{2},\varepsilon,\frac{\ell}{2}}(z;\tau)\right)\right]_{z=\tau}=q^{\frac{\ell}{2}}\sum_{n= 0}^\infty(-1)^{n\varepsilon}\left(\ell n+\frac{\ell}{2}-s\right)^{j-1}q^{\frac{1}{2\ell}\left(\ell n+\frac{\ell}{2}-s\right)^2}
\end{align*}
then gives
\begin{equation}\label{hform}
H_{s+\frac{\ell}{2}}(\tau)=(-1)^{\ell}\sum_{\substack{1\leq j\leq \ell \\ j\equiv \ell\pmod{2}}}\frac{D_{-j}(\tau)}{(j-1)!}\sum_{n= 0}^\infty(-1)^{n\varepsilon}\left(\ell n+\frac{\ell}{2}-s\right)^{j-1}q^{\frac{1}{2\ell}\left(\ell n+\frac{\ell}{2}-s\right)^2}.
\end{equation}
Noting that
\begin{align}
\label{E:F=h}
F_{\ell,s}(q) = (-i)^\ell (q)_\infty^{\ell^2 - 2\ell}  q^{-h_s-\frac{\ell}{8}} H_{s+\frac{\ell}{2}}(\tau)
\end{align}
finishes the claim.
\end{proof}

\section{Proof of Theorem \ref{main}}\label{sec:mainproof}
\subsection{Asymptotic series for quasimodular forms}

Using \eqref{etatran} and Corollary \ref{cor:intro} gives that
\begin{equation}\label{etaas}
\eta\left(\frac{it}{2\pi}\right)  \sim \sqrt{\frac{2\pi}{t}} e^{-\frac{\pi^2}{6t}}\quad \left(t\to 0^+\right).
\end{equation}
Plugging this and Corollary \ref{cor:intro}  into \eqref{E:F=h} we then obtain
\begin{equation}\label{asFh}
F_{\ell,s}\left(e^{-t}\right)\sim(-i)^{\ell}\left(\frac{2\pi}{t}\right)^{\frac{\ell^2}{2}-\ell}e^{-\frac{\pi^2}{12t}\left(\ell^2-2\ell\right)}H_{s+\frac{\ell}{2}}\left(\frac{it}{2\pi}\right).
\end{equation}
To determine the asymptotic behavior of $H_{s+\frac{\ell}{2}}$, we first investigate the coefficients in form of $D_{-j}$ in \eqref{hform}. Using \eqref{thetatay}, we write
\begin{equation}
\label{E:git2pi}
g_\ell\left(z;\frac{it}{2\pi}\right)=\frac{1}{(-2\pi z)^\ell}\exp\left(\ell\sum_{k=1}^{\infty}\frac{G_{2k}\left(\frac{it}{2\pi}\right)}{2k}z^{2k}\right).
\end{equation}
 From this we see that in $g_\ell(z; \tau)$ every coefficient in front of $z^{-j}$ has the shape
\begin{equation}
\label{E:Gktuples}
D_{-j}(\tau) = \sum_{m,k} c(m, k) G^{m_1}_{k_1}(\tau)\cdots G^{m_r}_{k_r}(\tau),
\end{equation}
where the summation is over all tuples of positive integers (of any finite length) $m = (m_1, \dots, m_r), k = (k_1, \dots, k_r)$ such that $m_1k_1+\dots +m_r k_r = \ell - j$ and the $c(m,k)$ are real constants depending on the tuple.
By \eqref{Eistran}, we have that, for $k \geq 2$,
\begin{equation*}
G_{2k}\left(\frac{it}{2\pi}\right) = \left(\frac{2\pi}{it}\right)^{2k} \left(-\frac{ (2\pi i)^{2k} B_{2k}}{(2k)!} + \frac{2 (2\pi i)^{2k}}{(2k-1)!} \sum_{n = 1}^{\infty} \sigma_{2k - 1}(n) e^{-\frac{4 \pi^2 n}{t}} \right).
\end{equation*}
Thus we obtain, for $k \geq 2$,
\begin{equation}
\label{E:G2kAsymp}
G_{2k}\left(\frac{it}{2\pi}\right) \sim -\frac{\left(\frac{4 \pi^2}{t}\right)^{2k} B_{2k}}{(2k)!},
\end{equation}
with an error term that decays exponentially as a function of $\frac{1}{t}$. For $G_2(\frac{it}{2\pi})$ the main term is also given by \eqref{E:G2kAsymp}, but now the error is only smaller by the polynomial factor $t$ due to the additional term $\frac{4\pi^2}{t}$ in the transformation \eqref{Eistran}. Regardless, if we now plug \eqref{E:G2kAsymp} into \eqref{E:git2pi}, the accumulated error term in \eqref{E:Gktuples} is smaller than the main term by at least a polynomial factor of $t$.
%For $k=1$, recall that
%\begin{align*}
%E_2(\tau)&=\tau^{-2}E_2\left(-\frac{1}{\tau}\right)+\frac{6i}{\pi \tau}.\\
%G_2(\tau)&=-\frac{B_2}{4}E_2(\tau)=\tau^{-2}G_2(\tau)-\frac{i}{4\pi\tau}.
%\end{align*}
%{\bf KB: This part needs to be moved}	
%For $k=1$, we have 	
%\begin{equation*}	G_2\left(\frac{it}{2\pi}\right) - \frac{B_2(2\pi)^2}{4t^2}+\frac{1}{2t} = -\frac{(2\pi)^2}{t^2} \left(G_2\left(\frac{2\pi i}{t}\right) + \frac{B_2}{4}\right) = - \frac{(2\pi)^2}{t_2} \sum_{n\geq 1} \sigma_1(n) e^{-\frac{4\pi^2 n}{t}}. 	
%\end{equation*}	

Furthermore, note that it is valid to calculate any given Laurent coefficient in \eqref{E:git2pi} by restricting $z$ to a segment of the imaginary axis, so $z = iy$ with $0 < y < v = \frac{t}{2\pi}$. This is a subset of the canonical analytic domain $0 < y < v.$ This restriction is convenient as it allows us to package all of the main asymptotic terms into a generating function, which can then be evaluated using the identities from Section \ref{sec:pre}.
In particular, for the remainder of this section we write
\begin{equation*}
f(z,t) \sim_z g(z,t)
\end{equation*}
for two functions
\begin{equation*}
f(z,t) = \sum_{n\in\Z} a_t(n) z^n,\qquad g(z,t) = \sum_{n\in\Z} b_t(n) z^n,
\end{equation*}
if for all $n$
\begin{equation*}
a_t(n) \sim b_t(n) \quad \left({\rm as } \; t\to 0^+\right).
\end{equation*}
Using \eqref{Bernid} (noting that $\frac{4 \pi^2 y}{t} < 2 \pi$), we find that for $z = iy$,
\begin{equation}\label{asge}
g_\ell\left(z;\frac{it}{2\pi}\right)\sim_z\frac{1}{(-2\pi z)^\ell}\exp\left(-\ell S\left(\frac{4\pi^2 z}{t}\right)\right) = \frac{1}{(-2\pi z)^\ell} \frac{\left(\frac{4\pi^2 z}{t}\right)^{\ell}e^{\frac{2\pi^2 \ell z}{t}}}{\left(e^{\frac{4\pi ^2 z}{t}}-1\right)^{\ell}}.
\end{equation}

We also require that $D_{-j}$ is a quasimodular form of weight $\ell-j$ and thus satisfies (see \eqref{Eistran})
\begin{equation}\label{Dquasi}
D_{-j}\left(\frac{it}{2\pi}\right)\ll t^{j-\ell}.
\end{equation}

We now distinguish 2 cases, depending on whether $\ell$ is even or odd.
\subsection{Proof of Theorem \ref{main} for $\ell$ odd}
We first assume that $\ell$ is odd.
Plugging the definition \eqref{defineF} of $\mathcal{F}_j$ in \eqref{hform} gives
\begin{equation}
\label{E:hellodd}
H_{s+\frac{\ell}{2}}\left(\frac{it}{2\pi}\right)=-\sum_{j=0}^{\frac{\ell-1}{2}}\frac{D_{-2j-1}\left(\frac{it}{2\pi}\right)}{(2j)!}(2\ell)^jt^{-j}\mathcal F_{j,\frac12-\frac{s}{\ell}}(2\ell t).
\end{equation}
Now, by \eqref{Dquasi} and \eqref{Fas}, we obtain that
\[
D_{-2j-1}\left(\frac{it}{2\pi}\right)\mathcal{F}_{j,\frac12-\frac{s}{\ell}}(2 \ell t) t^{-j}\ll t^{-\ell+2j+1}.
\]
Thus (if we show that this is not vanishing) the dominant term in \eqref{E:hellodd} comes from $j=0$ and gives, by \eqref{Fas2},
\begin{equation}
\label{E:maintermj=0}
H_{s+\frac{\ell}{2}}\left(\frac{it}{2\pi}\right)\sim - \frac12D_{-1}\left(\frac{it}{2\pi}\right).
\end{equation}

To determine the asymptotic behavior of $D_{-1}$, we compute, using \eqref{asge} and \eqref{fullF},
\begin{align*}
D_{-1}\left(\frac{it}{2\pi}\right) &= 2\pi i \operatorname{coeff}_{z^{-1}} g_{\ell}\left(z;\frac{it}{2\pi }\right) \sim 2\pi i\left(-\frac{2\pi}{t}\right)^\ell {\rm coeff}_{z^{-1}} \frac{e^{\frac{2\pi^2 \ell z}{t}}}{\left(e^{\frac{4\pi ^2 z}{t}}-1\right)^\ell}\\
&= 2\pi i\left(-\frac{2\pi }{t}\right)^\ell \frac{t}{4\pi^2} {\rm coeff}_{z^{-1}} \frac{e^{\frac{\ell z }{2}}}{\left(e^z-1\right)^\ell}=\frac{-i\left(\frac{2\pi}{t}\right)^{\ell-1}}{(\ell-1)!}B_{\ell-1}^{(\ell)}\left(\frac{\ell}{2}\right).
\end{align*}
Thus when $\ell$ is odd, we have $\mathcal{C}_\ell = (-1)^{\frac{\ell-1}{2}}\frac{B_{\ell-1}^{(\ell)}(\frac{\ell}{2})}{2(\ell-1)!}$.

The theorem statement then follows from \eqref{asFh} and Proposition \ref{Berpolpart}.

\qed

\subsection{Proof of Theorem \ref{main} for $\ell$ even.}
We next assume that $\ell$ is even. Plugging \eqref{defineF} in \eqref{hform} gives
\begin{equation*}
H_{s+\frac{\ell}{2}}\left(\frac{it}{2\pi}\right) =\sum_{1\leq j \leq \frac{\ell}{2}} \frac{D_{-2j}\left(\frac{it}{2\pi}\right)}{(2j-1)!}  \left(\frac{t}{2\ell}\right)^{\frac12-j} \mathcal G_{j,\frac12-\frac{s}{\ell}}\left(\frac{\ell t}{2}\right).
\end{equation*}
We have, by \eqref{Gas},
\begin{equation*}
\mathcal G_{j,\frac12-\frac{s}{\ell}}\left(\frac{\ell t}{2}\right)\sim \frac{(j-1)!}{\sqrt{2\ell t}}.
\end{equation*}
Now, by \eqref{Dquasi},
\begin{equation*}
D_{-2j}\left(\frac{it}{2\pi}\right) \mathcal G_{j,\frac12-\frac{s}{\ell}}\left(\frac{\ell t}{2}\right) t^{\frac12-j} \ll  t^{-\ell +j}.
\end{equation*}
Thus the dominant term comes from $j=1$ and gives asymptotically (if non-vanishing)
\begin{equation}
\label{E:D-2}
\frac{1}{t} D_{-2}\left(\frac{it}{2\pi}\right).
\end{equation}

Now, by \eqref{fullF} and \eqref{asge}, we obtain
\begin{align*}
D_{-2}\left(\frac{it}{2\pi}\right) &= (2\pi i)^2 \coeff_{z^{-2}} g_\ell\left(z;\frac{it}{2\pi}\right) \sim (2\pi i)^2 \left(\frac{2\pi}{t}\right)^\ell \left(\frac{t}{4\pi ^2}\right)^2 \coeff_{z^{-2}}\frac{e^{\frac{\ell z}{2}}}{\left(e^z-1\right)^\ell}\\
&=- \left(\frac{2\pi}{t}\right)^{\ell-2} \frac{B_{\ell-2}^{(\ell)}\left(\frac{\ell}{2}\right)}{(\ell-2)!}.
\end{align*}
Thus when $\ell$ is even, $\mathcal{C}_\ell = (-1)^{\frac{\ell}{2}+1}\frac{1}{2\pi}\frac{B_{\ell-2}^{(\ell)}(\frac{\ell}{2})}{(\ell-2)!}$.

Thus the claim follows from \eqref{E:D-2} and Proposition \ref{Berpolpart}.
\qed

\subsection{Proof of Corollary \ref{cor:intro}}
	%{\bf TD: Move proof to the body of the paper.}
The asymptotic formula \eqref{asVs} follows directly from Theorem \ref{main} by plugging in \eqref{constantcoeff}, combined with \eqref{etaas}.

For the second claim, we recall that
\begin{equation*}
{\rm qdim}(L_{\frak{g}}(-\Lambda_0 (s+1) +s \Lambda_1))=\lim_{t \to 0^+} \frac{  {\rm ch}\big[L_{\frak{g}}(-\Lambda_0 (s+1) +s \Lambda_1)\big](it)}{ {\rm ch}\big[L_{\frak{g}}(-\Lambda_0 )\big](it)}
\end{equation*}
and observe that \eqref{asVs} is independent of $s$.
\qed

%We now compare the case $\ell = 4$ to our previous calculations. Equation (4.1) of the ``Weyl'' file states that
%\begin{equation*}
%F_3(it) \sim \frac{e^{-\frac{2 \pi}{3t}}}{\pi t^7}.
%\end{equation*}
%As defined, $F_3(\tau)$ differs from $F_{4,0}(\tau)$ in that the denominator was ignored for simplicity. This extra factor is
%\begin{align*}
%\prod_{\alpha \in \Delta^+} \frac{1}{(\rho, \alpha)}
%& = \frac{1}{(\rho, \alpha_1) (\rho, \alpha_2) (\rho, \alpha_3) (\rho, \alpha_1 + \alpha_2) (\rho, \alpha_2 + \alpha_3) (\rho, \alpha_1+\alpha_2 + \alpha_3)} \\
%& = \frac{1}{1 \cdot 1 \cdot 1 \cdot 2 \cdot 2 \cdot 3} = \frac{1}{12},
%\end{align*}
%so $F_{4,0}(q) = \frac{1}{12} F_3(\tau).$ Thus we have
%\begin{equation}
%\label{E:F4}
%F_{4,0}\left(e^{-2 \pi t}\right) = \frac{1}{12} F_3(it) \sim \frac{e^{-\frac{2 \pi}{3t}}}{12 \pi t^7}.
%\end{equation}

%In comparison, \eqref{E:FellAsymp} states that (using $B_2^{(4)}(2) = -\frac13$)
%\begin{align*}
%F_{4,0}\left(e^{-t}\right) & \sim - \frac{1}{2 \pi} \left(\frac{2 \pi}{t}\right)^7 \frac{-\frac13}{2} e^{-\frac{4\pi^2}{3t}}= \frac{1}{12}  \frac{e^{-\frac{2\pi}{3 (t / 2\pi)}}}{\pi (t / 2\pi)^7}.
%\end{align*}
%Setting $t \mapsto 2 \pi t$, this is in agreement with \eqref{E:F4}.

\begin{remarks}\
\\
(1) In \cite{AP}, the following fusion rules formula
$$V_{s_1} \boxtimes V_{s_2}=V_{s_1+s_2},$$
was established, where, for convenience, we let $V_{s}:=L_{\frak{g}}(-\Lambda_0 (s+1) +s \Lambda_1))$, for $s \geq 0$,  and $V_{-s}:=L_{\frak{g}}(-\Lambda_0 (-s+1) -s \Lambda_{\ell-1}))$ for $s<0$. In this formula the symbol $\boxtimes$ denotes the {\em fusion product}, that is, it indicates that the space of intertwining operators for
this particular triple of modules is one-dimensional and zero otherwise. In other
words the fusion ring generated by isomorphism classes of ordinary irreducible $L_{\frak{g}}(-\Lambda_0)$ modules is isomorphic to $\mathbb{C}[\mathbb{Z}]$.
Note that Corollary \ref{cor:intro} is fully in agreement with this formula as quantum dimensions give the trivial one-dimensional representation of the fusion ring.
\\
(2) In \cite{BM2014}, the first and the third author studied asymptotic properties of the Fourier coefficients of the Bloch-Okounkov
 $n$-point functions. They also discussed level\footnote{Not to be confused with the level appearing in this paper.} $\ell$ $\in \mathbb{N}$ $n$-point functions $F^{(\ell)}$. As shown in  \cite[Section 4]{BM2014}, the Bloch-Okounkov $1$-point function  of level $\ell$ is given by  $F^{(\ell)}(z;\tau)=\frac{1}{\Theta(z;\tau)^{\ell}}$, where $\eta(\tau)^3 \Theta(z;\tau)= \vartheta(z;\tau)$. Theorem \ref{main} can be used to derive the leading asymptotics of its Fourier coefficients. After slight adjustments, due to additional Euler factors, we immediately get that the $s$-th Fourier coefficient $F^{(\ell)}_{s}$ satisfies (as $t \to 0^+$)
$$F^{(\ell)}_{s}\left(\frac{i t}{2 \pi}\right) \sim \mathcal{C}_{\ell},$$
so it has no growing term.
\end{remarks}

\section{A full asymptotic expansion for $\ell=3$}\label{sec:caseell3}

In this section we specialize to $\ell=3$ (i.e., $\frak{g}=\frak{sl}_3$) and explicitly work out the full asymptotic expansion in this case.

%We are interested in the $L_{\frak{g}}(-\Lambda_0)$-modules $L_{\frak{g}}(-(s+1)\Lambda_0+s \Lambda_1)$ where now $\frak{g}=\frak{sl}_3$. These are graded by conformal weights w.r.t. to
%Sugawara's degree zero operator $L(0)$ of central charge $c$. We let
%$${\rm ch}\big[ L_{\frak{g}}(-(s+1)\Lambda_0+s \Lambda_1)\big](\tau):= {\rm tr}_{L_{\frak{g}}((-s+1)\Lambda_0+s \Lambda_1)} q^{L(0)-\frac{c}{24}},$$
%where $L(0)$ on the lowest weight component of the module acts with eigenvalue
%$$\frac{(s \omega_1+2 \rho, s \omega_1)}{2(k+h^\vee)}=\frac{s^2 (\omega_1,\omega_1)+2s (\rho,\omega_1)}{2(-1+3)}=\frac{\frac{2}{3}s^2+2s}{2 \cdot 2}=\frac{s(s+3)}{6}$$
%and conformal anomaly
%$$-\frac{c}{24}=\frac{-1}{24}\frac{k \cdot \text{dim}(\frak{g})}{k+h^\vee}=\frac{-1}{24}\frac{(-1){\rm dim}(\frak{sl}_3)}{-1+h^\vee}=\frac{8}{24\cdot 2}=\frac{1}{6},$$
%where $k=-1$ and $h^\vee=3$.
%Thus ``the best" multiplicative correction seems to be $q^{\frac{s(s+3)}{6}+\frac{1}{6}}=q^{\frac{s^2}{6}+\frac{s}{2}+\frac{1}{6}}$, so {\bf KB: Not a sentence KM: I would guess that this is meant to be removed as %well, as the $q$-power is now defined in Section 1. }.
%\begin{equation}
%\label{E:chqF}
%{\rm ch}\big[ L_{\frak{g}}((-s+1)\Lambda_0+s \Lambda_1)\big](\tau)=q^{\frac{s^2}{6}+\frac{s}{2}+\frac{1}{6}} \frac{F_{3, s}(q)}{(q)_\infty^{8}}.
%\end{equation}

\begin{proposition}
\label{P:ell=3Asymp}
For  $\Lambda \in \{\Lambda_1,\Lambda_2 \}$ we have, as $t \to 0^+$,
	\begin{align*}
	&{\rm ch}\big[ L_{\frak{g}}(-(s+1)\Lambda_0+s \Lambda)\big]\left(\frac{it}{2\pi}\right) \\
	&\qquad=  e^{-\frac{5\pi^2}{6t}}\left(\frac{2\pi}{t}\right)^{\frac52}\left(
\left(\frac{\pi^2}{4t^2}+\frac{3}{4t}\right)\sum_{n=0}^N \frac{E_{2n}\left(\frac12-\frac{s}{3}\right)}{n!}\left(-\frac{3t}{2}\right)^n  \right. \\
& \qquad \qquad \qquad \qquad \qquad + \left. \frac{3}{8t} \sum_{n=0}^N \frac{E_{2n+2}\left(\frac12-\frac{s}{3}\right)}{n!} \left(-\frac{3t}{2}\right)^n + O\left(t^{N+1}\right)  \right).
	\end{align*}
In particular, for $s=0$ we have
\begin{multline} \label{s=0}
	{\rm ch}\big[ L_{\frak{g}}(- \Lambda_0)\big]\left(\frac{it}{2\pi}\right) \\= e^{-\frac{5\pi^2}{6t}}\left(\frac{2\pi}{t}\right)^{\frac52} \left(\frac{\pi^2}{4 t^2}+\frac{3}{4t}-\frac{1}{4t} \frac{\partial}{\partial t} \right)\left(\sum_{n=0}^N \frac{E_{2n}}{n!} \left(\frac{-3t}{8}\right)^n + O\left(t^{N+1}\right) \right).
\end{multline}

\end{proposition}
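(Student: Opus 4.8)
The plan is to reduce the whole character to the single partial-theta coefficient $H_{s+\frac32}$ and then exploit the special feature of $\ell=3$: all of the data feeding its asymptotics is \emph{finite and exact}, except for two explicit Euler-polynomial series.

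First I would collapse the character into an $\eta$-quotient. Combining \eqref{constantcoeff} with the representation $F_{\ell,s}(q)=(-i)^\ell(q)_\infty^{\ell^2-2\ell}q^{-h_s-\frac\ell8}H_{s+\frac\ell2}(\tau)$ from \eqref{E:F=h} (equivalently Theorem~\ref{T:F=Dj}), the factors $(q)_\infty^{\ell^2-2\ell}$ and $(q)_\infty^{-(\ell^2-1)}$ combine with the accumulated $q$-powers to give, for $\ell=3$,
$$
{\rm ch}\big[L_{\frak g}(-(s+1)\Lambda_0+s\Lambda)\big]\!\left(\tfrac{it}{2\pi}\right)
=\kappa\,\frac{H_{s+\frac32}\!\left(\frac{it}{2\pi}\right)}{\eta\!\left(\frac{it}{2\pi}\right)^{5}},
$$
with $\kappa$ an explicit fourth root of unity (the exponent $5=2\ell-1$). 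By \eqref{etaas}, whose error is exponentially small because the transformed nome is $e^{-4\pi^2/t}$, the factor $\eta(\frac{it}{2\pi})^{-5}$ supplies precisely the power-of-$t$ times exponential prefactor of the stated formula, with \emph{no} error at any polynomial order. Hence the entire asymptotic expansion of the character is this explicit prefactor times the expansion of $H_{s+\frac32}(\frac{it}{2\pi})$, which I would compute next.

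For $H_{s+\frac32}$ I would start from \eqref{E:hellodd} with $\ell=3$, namely $H_{s+\frac32}(\frac{it}{2\pi})=-D_{-1}\mathcal F_{0,r}(6t)-3D_{-3}\,t^{-1}\mathcal F_{1,r}(6t)$ with $r=\frac12-\frac s3$, and supply two ingredients. \emph{(a) The quasimodular coefficients.} From the Laurent expansion \eqref{E:git2pi}--\eqref{E:gDdef}, $D_{-3}$ is a constant and $D_{-1}$ is a scalar multiple of the weight-two Eisenstein series $G_2$. The inhomogeneous transformation \eqref{Eistran}, together with $G_2(\frac{2\pi i}{t})\to\frac{\pi^2}{3}$ from \eqref{E:G2k}, yields a \emph{terminating} expansion $D_{-1}(\frac{it}{2\pi})=c_2t^{-2}+c_1t^{-1}$ up to an exponentially small error, with explicit constants $c_1,c_2$; this finiteness is exactly what makes an all-orders expansion feasible at $\ell=3$. \emph{(b) The partial-theta sums.} I would expand $\mathcal F_{0,r}$ and $\mathcal F_{1,r}$ to all orders via \eqref{fullF2}, whose coefficients are the differences $B_{2n+2j+1}(\frac r2)-B_{2n+2j+1}(\frac{r+1}2)$, and then apply the Euler--Bernoulli identity \eqref{EB} with $m=2$ to rewrite each difference as a single Euler polynomial $E_{2n+2j}(r)$. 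This produces $\mathcal F_{0,r}(6t)$ and $\mathcal F_{1,r}(6t)$ as explicit power series in $t$ with coefficients $E_{2n}(r)$ and $E_{2n+2}(r)$ respectively. Substituting (a) and (b) into \eqref{E:hellodd} and multiplying by the prefactor, I collect by powers of $t$: since $D_{-1}\sim t^{-2}$ multiplies the $E_{2n}(r)$-series while $D_{-3}t^{-1}\mathcal F_{1,r}$ multiplies the $E_{2n+2}(r)$-series, the two Euler-polynomial sums survive with rational-in-$1/t$ prefactors, giving the stated shape. For $s=0$ (so $r=\frac12$) I would use \eqref{Euler12} in the form $E_{2n}(\frac12)=4^{-n}E_{2n}$ to replace the argument-$\frac12$ Euler polynomials by Euler \emph{numbers} (rescaling the series variable to $-\frac{3t}{8}$), and then invoke the termwise identity $\sum_n\frac{E_{2n+2}(r)}{n!}(-\frac{3t}{2})^n=-\frac23\frac{\partial}{\partial t}\sum_n\frac{E_{2n}(r)}{n!}(-\frac{3t}{2})^n$, since differentiation shifts $E_{2n}\mapsto E_{2n+2}$; this packages the second sum as a $t$-derivative of the first and collapses the bracket into the single differential operator of \eqref{s=0}.

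The main obstacle I anticipate is the bookkeeping in the assembly step: I must carry the expansions of $\mathcal F_{0,r}$ and $\mathcal F_{1,r}$ to high enough order that, after multiplication by the \emph{negative} powers $t^{-2},t^{-1}$ coming from $D_{-1}$ and from the explicit $t^{-1}$ in \eqref{E:hellodd}, truncating both series at the $N$-th term still leaves a uniform $O(t^{N+1})$ remainder; the exponentially small errors from $\eta$ and from $G_2$ are harmless at every polynomial order. The one genuinely nontrivial input is the Euler--Bernoulli conversion \eqref{EB}, which is the mechanism that turns the Bernoulli-difference coefficients of the partial theta functions into the clean Euler polynomials $E_{2n}(\frac12-\frac s3)$ of the final formula.
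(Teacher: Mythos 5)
Your proposal is correct and follows essentially the same route as the paper's own proof: reduce the character to $\frac{i}{\eta(\tau)^5}H_{s+\frac32}(\tau)$ via \eqref{constantcoeff} and \eqref{E:F=h}, expand $H_{s+\frac32}$ through \eqref{E:hellodd} with $D_{-3}=i$ constant and $D_{-1}=-\frac{3i}{8\pi^2}G_2(\tau)$ made explicit (and terminating up to exponentially small error) by \eqref{Eistran}, convert the Bernoulli differences in \eqref{fullF2} to Euler polynomials via \eqref{EB} with $m=2$, and finally use \eqref{etaas}. Your handling of the $s=0$ case — $E_{2n}\left(\frac12\right)=4^{-n}E_{2n}$ from \eqref{Euler12} plus the termwise $t$-derivative identity repackaging the $E_{2n+2}$-series — is exactly how the paper's operator form \eqref{s=0} arises, so there is nothing further to add.
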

\begin{proof}
We have, by \eqref{E:F=h},
\begin{equation}
\label{E:F3s}
F_{3,s}(q) = i(q)_\infty^3 q^{-h_s-\frac38} H_{s+\frac32}(\tau).
\end{equation}
Now, by \eqref{E:hellodd}
\begin{equation}\label{expandH}
H_{s+\frac32}\left(\frac{it}{2\pi}\right) = -D_{-1}\left(\frac{it}{2\pi}\right)\mathcal F_{0,\frac12-\frac{s}{3}}(6 t) - D_{-3}\left(\frac{it}{2\pi}\right)\frac{3}{t} \mathcal F_{1,\frac12-\frac{s}{3}}(6 t).
\end{equation}
We compute the Laurent coefficients $D_{-j}$, using \eqref{thetatay} and \eqref{E:gDdef},
\begin{equation*}
D_{-3}(\tau) = i,\qquad D_{-1}(\tau) = -\frac{3i}{8 \pi^2} G_2(\tau).
\end{equation*}
Using
\begin{align*}
G_2\left(\frac{it}{2\pi}\right) = -\frac{4\pi^4}{3t^2} + \frac{4\pi^2}{t} +O\left(t^{-2}e^{-\frac{4\pi^2}{t}}\right)
\end{align*}
\eqref{fullF2}, and \eqref{expandH}, we then obtain
\begin{align}\notag
H_{s+\frac32}\left(\frac{it}{2\pi}\right)&=\left(-\frac{\pi^2 i}{4t^2}-\frac{3i}{4t}\right)\sum_{n=0}^N \frac{E_{2n}\left(\frac12-\frac{s}{3}\right)}{n!} \left(-\frac{3t}{2}\right)^n\\
&\quad - \frac{3i}{8t} \sum_{n=0}^N \frac{E_{2n+2}\left(\frac12-\frac{s}{3}\right)}{n!} \left(-\frac{3t}{2}\right)^n+O\left(t^{N+1}\right).\label{has}
\end{align}
Here we employ \eqref{EB} with $m=2$.

To finish the claim, we plug \eqref{E:F3s} into \eqref{constantcoeff}, to obtain that
\begin{align*}
{\rm ch}\big[ L_{\frak{g}}(-(s+1)\Lambda_0+s \Lambda_1)\big](\tau)
= \frac{i}{\eta(\tau)^5} H_{s+\frac32}(\tau).
\end{align*}
Using \eqref{etaas} and \eqref{has}, then yields the claims. Formula \eqref{s=0} follows easily by \eqref{Euler12}.
\end{proof}

Proposition \ref{P:ell=3Asymp} immediately leads to the following asymptotic expansion for the quantum dimension.
\begin{corollary} \label{C:Asym}
For $\Lambda \in \{ \Lambda_1, \Lambda_2 \}$ we have, as $t \to 0^+$,
\begin{equation} \label{quotient}
 \frac{{\rm ch}\big[ L_{\frak{g}}(-(s+1)\Lambda_0+s \Lambda)\big]\left(it \right)}{{\rm ch}\big[ L_{\frak{g}}(- \Lambda_0)\big]\left(it \right)} \sim 1- \frac{s^2(\pi^2-1)}{3 \pi}t +O\left(t^2\right).
 \end{equation}

\end{corollary}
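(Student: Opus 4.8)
The plan is to form the quotient directly from the two expansions supplied by Proposition~\ref{P:ell=3Asymp}. The first observation is that the prefactor $e^{-5\pi^2/(6t)}(2\pi/t)^{5/2}$ is \emph{identical} in numerator and denominator, so it cancels; what survives is the ratio of the two bracketed series. Abbreviating the bracket in Proposition~\ref{P:ell=3Asymp} by $A_s(t)$ and writing $\alpha_s := \tfrac12 - \tfrac{s}{3}$, this reduces the claim to an expansion of $A_s(t)/A_0(t)$. One subtlety to flag at the outset: Proposition~\ref{P:ell=3Asymp} is stated at $\frac{it}{2\pi}$ whereas Corollary~\ref{C:Asym} is stated at $it$, so after expanding $A_s/A_0$ in $t$ I will rescale $t\mapsto 2\pi t$ at the very end.

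Next I would expand $A_s(t)$ to the order needed. Since $E_0(x)=1$, the leading term of $A_s$ is $\frac{\pi^2}{4t^2}$, independent of $s$; this is precisely the cancellation that forces $\mathrm{qdim}=1$ and pushes the first nontrivial contribution to order $t$. To capture it I factor out $\frac{\pi^2}{4t^2}$ and collect the $t^{-1}$ coefficient of $A_s$, which receives three contributions: from $\frac{\pi^2}{4t^2}$ times the $O(t)$ term $-\tfrac32 E_2(\alpha_s)$ of the first sum, from $\frac{3}{4t}$ times the constant term $1$ of the first sum, and from $\frac{3}{8t}$ times the constant term $E_2(\alpha_s)$ of the second sum. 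Using $E_2(x)=x^2-x$ with $\alpha_s=\tfrac12-\tfrac{s}{3}$ gives $E_2(\alpha_s)=\tfrac{s^2}{9}-\tfrac14$, so that $E_2(\alpha_s)-E_2(\alpha_0)=\tfrac{s^2}{9}$.

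Forming the ratio via $\frac{1+a_s t}{1+a_0 t}=1+(a_s-a_0)t+O(t^2)$, all $s$-independent pieces (in particular the $\tfrac34$ coming from the $\frac{3}{4t}$ weight) drop out, and the coefficient of $t$ is proportional to $E_2(\alpha_s)-E_2(\alpha_0)=\tfrac{s^2}{9}$ with the scalar $-\tfrac{3\pi^2}{8}+\tfrac38=-\tfrac38(\pi^2-1)$ arising from the two surviving weights. Dividing by the factored leading coefficient $\tfrac{\pi^2}{4}$ then yields the coefficient of $t$ in $A_s/A_0$ at $\frac{it}{2\pi}$ as $-\frac{s^2(\pi^2-1)}{6\pi^2}$. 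Finally, replacing $t$ by $2\pi t$ to pass from $\frac{it}{2\pi}$ to $it$ multiplies this coefficient by $2\pi$, giving $-\frac{s^2(\pi^2-1)}{3\pi}$, as claimed.

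The computation is otherwise routine; the two places that require care are the correct accounting of the three sources of the $t^{-1}$ coefficient of $A_s$ (it is easy to overlook the $\frac{3}{8t}$ contribution of the second sum), and remembering the rescaling $t\mapsto 2\pi t$ between the two normalizations, without which the stated coefficient would be off by a factor of $2\pi$.
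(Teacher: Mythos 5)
Your proposal is correct and follows exactly the route the paper intends: the corollary is stated as an immediate consequence of Proposition \ref{P:ell=3Asymp}, obtained by cancelling the common prefactor $e^{-5\pi^2/(6t)}(2\pi/t)^{5/2}$ and the $s$-independent leading term $\frac{\pi^2}{4t^2}$, then comparing the order-$t^{-1}$ coefficients via $E_2\left(\frac12-\frac{s}{3}\right)-E_2\left(\frac12\right)=\frac{s^2}{9}$. Your bookkeeping of the three contributions to the $t^{-1}$ coefficient and the final rescaling $t\mapsto 2\pi t$ (to pass from the argument $\frac{it}{2\pi}$ to $it$) are both accurate, yielding the stated coefficient $-\frac{s^2(\pi^2-1)}{3\pi}$.
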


\begin{remark} (Heisenberg patterns) Let $M(1)$ be the rank one Heisenberg vertex operator algebra.
For an $M(1)$-module $M(1,s)$, we easily get
\begin{equation}
\label{Heisen}
\frac{{\rm ch}[M(1,s)]\left(it \right)}{{\rm ch}\big[M(1)]\left(it \right)}=e^{-2 \pi s^2 t}=1-2 \pi s^2 t + O\left(t^2\right).
\end{equation}
The reader should notice the similarity between \eqref{Heisen} and Corollary \ref{C:Asym} as they both have an $s^2$ in the second term of the asymptotic expansion (incidentally
both vertex algebras share ``additive'' fusion rules \cite{AP,AP2}). It would be very interesting to find a closed expression for the asymptotic expansion in \eqref{quotient}.
\end{remark}

\section{A decomposition for $F_{\ell,s}(\zeta_1,\ldots,\zeta_{\ell-1})$}
\label{sec:decomp}

In this part we prove Theorem \ref{T:Fls} from the introduction.

\begin{proof}[Proof of Theorem \ref{T:Fls}]
	We write
	\begin{equation}
	\label{E:F=calF}
	F_{\ell,s}(\zeta_1,\ldots,\zeta_{\ell-1})=(-i)^\ell q^{\frac{\ell}{8}}\prod_{j=1}^{\ell-1} \zeta_j^{-\frac{j}{2}}(q)^{\ell+1}_{\infty}\mathcal F_{\ell,s+\frac{\ell}{2}}.
	\end{equation}
	Here, for $r\in\mathbb Z +\frac{\ell}{2}$,
	\begin{equation*}
	%\label{E:calFshift}
	\mathcal F_{\ell,r}:=\text{coeff}_{\zeta_\ell^r}\mathcal F_\ell(z_\ell),
	\end{equation*}
	where
	\[
	\mathcal F_{\ell}(z_\ell)=\mathcal F_{\ell}(z_1,\dots,z_{\ell-1},z_{\ell}):=\frac{(-1)^\ell}{\prod_{j=1}^{\ell}\vartheta(w_j)}.
	\]
	Using \eqref{thetatran}, we have
	\[
	\mathcal F_\ell(z_\ell +1)=(-1)^\ell\mathcal F_\ell(z_\ell),\qquad \mathcal F_\ell(z_\ell -\tau)=(-1)^\ell q^{\frac{\ell}{2}}\mathcal F_\ell(z_\ell) \prod_{j=1}^{\ell}\zeta_j^{-j} .
	\]
	We now find a formula for $\mathcal{F}_{\ell,r}$ that is closely related to Theorem \ref{hltheorem} above (which was proven in \cite{BRZ} by a similar argument). In particular, we compute the following integral in two ways:
	\begin{equation}\label{computeint}
	\int_{\partial P_{z_0}} \mathcal F_\ell(w) \vartheta_{r,\varepsilon,\frac{\ell}{2}}^+ \left(w+\frac{1}{\ell} \sum_{j=1}^{\ell-1} jz_j\right) dw.
	\end{equation}
	Here $z_0$ is chosen such that $w_j \in P_{z_0}$ for all $j$ (which is possible due to the restrictions on the $\zeta_j$); note that $-v < {\rm Im}(z_0) < 0$.
	
	For the first computation, we also use the elliptic shifts (as above, $\varepsilon \equiv \ell \pmod{2}$)
	\begin{align*} &(-1)^\ell q^{\frac{\ell}{2}} e^{-2 \pi i \ell z} \prod_{j = 1}^{\ell - 1} \zeta_j^{-j} \vartheta_{r,\varepsilon,\frac{\ell}{2}}^+\left(z+\frac{1}{\ell}\sum_{j=1}^{\ell-1}jz_j -\tau\right)
	 -\vartheta_{r,\varepsilon,\frac{\ell}{2}}^+\left(z+\frac{1}{\ell}\sum_{j=1}^{\ell-1}jz_j\right)\\
	 &\phantom{\vartheta_{r,\varepsilon,\frac{\ell}{2}}^+\left(z+\frac{1}{\ell}\sum_{j=1}^{\ell-1}jz_j\right)}
	=(-1)^\ell q^{\frac{(r+\ell)^2}{2\ell}}e^{-2\pi i(r+\ell)z}\prod_{j=1}^{\ell-1}\zeta_j^{-\frac{j(r+\ell)}{\ell}},\\
	 &\vartheta_{r,\varepsilon,\frac{\ell}{2}}^+\left(z+\frac{1}{\ell}\sum_{j=1}^{\ell-1}jz_j\right) = (-1)^\ell \vartheta_{r,\varepsilon,\frac{\ell}{2}}^+\left(z+\frac{1}{\ell}\sum_{j=1}^{\ell-1}jz_j+1\right),
	\end{align*}
	to obtain  that \eqref{computeint} equals
	\begin{equation}
	\label{E:Fellrfirst}
	(-1)^\ell q^{\frac{(r+\ell)^2}{2\ell}}  \mathcal{F}_{\ell, r + \ell}\prod_{j = 1}^{\ell - 1} \zeta_j^{-\frac{(r+\ell)j}{\ell}}.
	\end{equation}
	
	On the other hand, the Residue Theorem implies that \eqref{computeint} also equals
	\[
	2\pi i \sum_{w\in S_{z_0}(\tau)}\text{Res}_{z=w}\mathcal F_\ell(z)\vartheta_{r,\varepsilon,\frac{\ell}{2}}^+\left(z+\frac{1}{\ell}\sum_{j=1}^{\ell-1}jz_j\right).
	\]
	Now, for $1 \leq j \leq \ell$, $\mathcal F_\ell$ has simple poles at $w_j$. We compute
	\begin{align*}
	\text{Res}_{w=w_\nu}\mathcal F_\ell(w) = -\frac{1}{2\pi \eta^3}\frac{1}{\prod_{\substack{j=1\\j\neq \nu}}^{\ell}\vartheta(w_\nu-w_j)}.
	\end{align*}
	Comparing to \eqref{E:Fellrfirst}, we find that
	\begin{equation*}
	\mathcal{F}_{\ell, r+\ell} = -i (-1)^\ell q^{-\frac{(r+\ell)^2}{2\ell}} \prod_{j =1}^{\ell - 1} \zeta_j^{\frac{(r+\ell)j}{\ell}} \eta(\tau)^{-3} \sum_{\nu=1}^{\ell}\frac{\vartheta_{r,\ell,\frac{\ell}{2}}^+\left(w_\nu-\frac{1}{\ell}\sum_{j=1}^{\ell}w_j \right)}
	{\prod_{\substack{j=1\\j\neq\nu}}^{\ell} \vartheta\left(w_\nu-w_j\right)}.
	\end{equation*}
	Plugging in to \eqref{E:F=calF} and setting $r = s - \frac{\ell}{2}$ gives the claim.
	
\end{proof}

\begin{remarks}\ \\
	(1)\ One could prove related decompositions without the condition that the $w_j$ are distinct. Any repeated values among the $w_j$ require modified calculations of residues in the proof above, leading to partial derivatives of $\vartheta^+$ (as in Theorem \ref{T:F=Dj}, which is related to the limiting case that all $w_j$ have the repeated value $0$). \\
	(2)\ Theorem \ref{T:Fls} could be used to determine the asymptotic behavior as $t\to 0^+$ of $F_{\ell,s}(\zeta_1,\ldots,\zeta_{\ell-1})$, so long as the $\zeta_j$ are also bounded in the ranges given in the theorem.  \\
	(3)\ We stress that Theorem \ref{T:Fls} also applies to $\ell=2$, which is quite different compared to $\ell \geq 3$ because the
	relevant vertex algebra is not an affine Lie algebra). Furthermore, note that in the case $\ell = 1$, Theorem \ref{T:Fls} is equivalent to Theorem \ref{T:F=Dj}.
\end{remarks}

\section{Modular-type transformation properties of $F_{\ell,s}$}

In this section we discuss general modular transformation properties of $F_{\ell,s}$.
The modular transformation properties of the denominator appearing in the formula for $F_{\ell,s}$ are well-understood, so we only have to consider the numerator.
We note that in the parlance of \cite{CM1,CMW}, $\vartheta^+_{s + \frac{\ell}{2}, \ell, \frac{\ell}{2}}(z; \tau)$ is an example of a regularized (Jacobi) false theta function. The following proposition gives the modular transformation properties of these functions.

\begin{proposition} \label{false-mod-general}
For $z \in \mathbb{C}\backslash\mathbb R$ and $\left(\begin{smallmatrix}
a & b \\ c & d
\end{smallmatrix}\right)\in\SL_2(\Z)$ with $c\neq 0$, we have
	\begin{equation*}
	\begin{aligned}	 &\vartheta^+_{r,\varepsilon,M}\left(z;\frac{a\tau+b}{c\tau+d}\right)\\
	&= \sqrt{-\frac{i(c\tau+d)}{2}}\sum_{j=0}^{2c-1}(-1)^{j\varepsilon}\zeta^{a(2Mj-r)^2}_{4cM}\Bigg(\zeta^{-r+2Mj}\int_{\mathbb R}\frac{e^{\frac{\pi i(c\tau+d)x^2}{2}-\frac{\pi i}{\sqrt{cM}}(r-2Mj)x}}{1-\zeta^{4cM}e^{4\pi i\sqrt{cM}x}}dx\\
	&\hspace{2.5cm}+\frac{1}{4\sqrt{cM}}(1-{\rm sgn}(\im(z)))\zeta_{8cM}^{2Mj-r} e^{2\pi i cM (c\tau+d)\left(z-\frac{1}{8cM}\right)^2}\\
	&\hspace{3cm}\times  \vartheta\left(\frac{c\tau + d}{2}\left(z-\frac{1}{8cM}\right) - \frac12+\frac{r-2Mj}{4cM}; \frac{c\tau+d}{8cM} \right)\Bigg).
	\end{aligned}
	\end{equation*}

\end{proposition}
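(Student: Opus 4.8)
The plan is to start from the defining series \eqref{ptheta} and convert the one-sided sum over $n\geq 0$ into a Mordell-type contour integral. The engine is the geometric series identity $\frac{1}{1-w}=\sum_{n\geq 0}w^n$ (valid for $|w|<1$): I would seek constants so that
$$\vartheta^+_{r,\varepsilon,M}(z;\tau)=C\int_{\mathcal C}\frac{e^{A(\tau)x^2+B(z)x}}{1-e^{\gamma x}}\,dx,$$
where $C=C(z,\tau)$, and where expanding the denominator and evaluating each resulting Gaussian integral term-by-term reproduces the summand $(-1)^{n\varepsilon}\zeta^{2Mn-r}q^{(2Mn-r)^2/4M}$. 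The point is that the factor $w^n=e^{\gamma n x}$ contributes only a linear-in-$n$ shift to the Gaussian exponent, so that completing the square in $x$ turns this shift into the quadratic exponent $(2Mn-r)^2$; the $z$-dependent cross term then supplies the $\zeta^{2Mn-r}$ factor. The contour $\mathcal C$ is a horizontal line chosen so that the poles of the integrand lie on the correct side, which is where the restriction $\Im(z)\gtrless 0$ first enters.

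With this representation in hand, I would substitute $\tau\mapsto\frac{a\tau+b}{c\tau+d}$ and transform the Gaussian kernel. The key technical step is to rewrite $e^{\pi i\frac{a\tau+b}{c\tau+d}(\cdots)^2}$ in terms of a Gaussian in $\tau$ itself. Exactly as in the transformation of ordinary theta functions in \eqref{thetatran}, this is achieved by splitting the integration variable into residue classes modulo $2c$ and applying Gaussian reciprocity to each class. This is the origin of the finite Gauss sum $\sum_{j=0}^{2c-1}(-1)^{j\varepsilon}\zeta^{a(2Mj-r)^2}_{4cM}$, of the metaplectic prefactor $\sqrt{-i(c\tau+d)/2}$, and of the rescaling by $\sqrt{cM}$ appearing in both the Gaussian exponent $\frac{\pi i(c\tau+d)x^2}{2}$ and the linear term $\frac{\pi i}{\sqrt{cM}}(r-2Mj)x$ of the stated integrals.

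Finally, I would deform the transformed contour back to the real line $\mathbb R$ to match the claimed expression. The integrand has simple poles at the zeros of $1-\zeta^{4cM}e^{4\pi i\sqrt{cM}x}$, located at $x_k=\frac{k-4cMz}{2\sqrt{cM}}$ with $\Im(x_k)=-2\sqrt{cM}\,\Im(z)$; hence they lie below or above $\mathbb R$ according to the sign of $\Im(z)$. Deforming across them picks up $2\pi i$ times the sum of residues, and since the residue of $\frac{N(x)}{1-e^{h(x)}}$ at such a pole equals $\frac{N(x_k)}{-h'(x_k)}=\frac{N(x_k)}{-4\pi i\sqrt{cM}}$, summing over $k$ produces a Gaussian series in $k$, that is, a full Jacobi theta function. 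Collecting the prefactors reassembles exactly the term $\vartheta\!\left(\frac{c\tau+d}{2}\left(z-\frac{1}{8cM}\right)-\frac12+\frac{r-2Mj}{4cM};\frac{c\tau+d}{8cM}\right)$, while the pole-crossing indicator is encoded by the factor $\frac{1}{4\sqrt{cM}}(1-\sgn(\Im(z)))$, which vanishes for $\Im(z)>0$ and equals $\frac{1}{2\sqrt{cM}}$ for $\Im(z)<0$.

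The main obstacle I expect is the bookkeeping in the middle step: correctly carrying out the residue-class decomposition modulo $2c$, tracking all the roots of unity $\zeta_{4cM}$ and $\zeta_{8cM}$ together with the branch of $\sqrt{-i(c\tau+d)/2}$, and verifying that the residue sum in the last step matches the claimed theta argument and modulus on the nose. None of these steps is conceptually difficult, but aligning the normalizations so that the integral piece, the Gauss-sum prefactor, and the theta correction fit together precisely will require care.
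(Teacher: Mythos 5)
Your proposal is in substance the paper's proof of Proposition \ref{false-mod-general}, up to a reordering of steps and one component done by a different (equivalent) mechanism. Two remarks. First, a small but real fix: you cannot split the \emph{integration} variable into residue classes modulo $2c$ --- once you have passed to a single Mordell-type integral there is no discrete variable left, and the arithmetic factor coming from $\frac{a\tau+b}{c\tau+d}=\frac{a}{c}-\frac{1}{c(c\tau+d)}$ only makes sense on the series. So your Steps 1 and 2 must be interchanged, exactly as the paper does: first split the summation index $n$ modulo $2c$ in the defining series \eqref{ptheta}, obtaining
$\vartheta^+_{r,\varepsilon,M}\left(z;\frac{a\tau+b}{c\tau+d}\right)=\sum_{j=0}^{2c-1}(-1)^{j\varepsilon}\zeta_{4cM}^{a(2Mj-r)^2}\,\vartheta^+_{r-2Mj,0,2cM}\left(z;-\frac{2}{c\tau+d}\right)$,
and only then apply Gauss's integral formula $e^{-\frac{\pi i}{\tau}A^2}=\sqrt{-i\tau}\int_{\R}e^{\pi i\tau x^2+2\pi iAx}\,dx$ and sum the geometric series (convergent precisely when $\im(z)>0$) to reach the inversion formula \eqref{E:thetai/w-general}; this is a presentational correction, not a missing idea. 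Second, your handling of the wall-crossing genuinely differs from the paper's: for $\im(z)<0$ the paper completes the partial theta to a full one via $\vartheta^+_{r,0,M}(z;\tau)+\vartheta^+_{-r-2M,0,M}(-z;\tau)=q^{\frac{(M-r)^2}{4M}}\zeta^{M-r}\vartheta\left(2Mz-\frac12+(M-r)\tau;2M\tau\right)$, transforms the full theta with \eqref{etatran}, and applies the integral representation to the complementary piece (whose argument $-z$ has positive imaginary part); you instead keep one integral with a movable contour and collect the residues at $x_k=\frac{k-4cMz}{2\sqrt{cM}}$ when deforming to $\R$. The two are equivalent --- your residue sum over $k$ is exactly the complementary theta series, and your pole locations and residue value $N(x_k)/\left(-4\pi i\sqrt{cM}\right)$ check out --- and your route makes the factor $\frac{1}{4\sqrt{cM}}\left(1-\sgn(\im(z))\right)$ transparent within a single formula, at the modest cost of justifying the contour shift and termwise residue summation (easy, by the Gaussian decay of the numerator), where the paper instead leans on the known modularity of $\vartheta$. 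Remember also to reduce to $c>0$ at the outset, as the paper does.
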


\begin{proof}
	We assume without loss of generality that $c>0$. Plugging in the series representation of $\vartheta^+$ and writing $\frac{a\tau+b}{c\tau+d}=\frac{a}{c}-\frac{1}{c(c\tau+d)}$, we find that
	\begin{align*}
	\vartheta_{r,\varepsilon,M}^+\left(z;\frac{a\tau+b}{c\tau+d}\right) = \sum_{j=0}^{2c-1}(-1)^{j\varepsilon} \zeta_{4cM}^{a(2Mj-r)^2}  \vartheta_{r-2Mj,0,2cM}^+\left(z;-\frac{2}{c\tau+d}\right).
	\end{align*}

We next invert each term in the sum by finding an inversion formula for $\vartheta_{r, 0,M}^+(z;\frac{i}{w})$. If $\im(z)>0$, then we use
\begin{equation*}
e^{-\frac{\pi i }{\tau} A^2}=\sqrt{- i \tau} \int_{\mathbb{R}} e^{\pi i \tau x^2 + 2 \pi i A x} dx,
\end{equation*}
to obtain that
\begin{align}
\label{E:thetai/w-general}
\vartheta_{r,0,M}^+\left(z;\frac{i}{w}\right) = \sqrt{w} \zeta^{-r} \int_\R \frac{e^{-\pi wx^2-\frac{2\pi ir}{\sqrt{2M}}x}}{1-\zeta^{2M}e^{2\pi i\sqrt{2M}x}}dx.
\end{align}
Plugging into \eqref{E:thetai/w-general} gives the claim in this case.

If $\im(z)<0$, then we complete the theta function, calculating
\begin{align*}
&\vartheta_{r,0,M}^+(z;\tau) + \vartheta^+_{-r-2M,0,M}(-z;\tau)= q^{\frac{(M-r)^2}{4M}}  \zeta^{M-r} \vartheta\left(2Mz-\frac{1}{2}+(M-r)\tau;2M\tau\right).
\end{align*}
For the partial theta function $\vartheta^+_{-r-2M,0,M}(-z;\tau)$ we again use \eqref{E:thetai/w-general}. which is invariant under $r \mapsto -r - 2M, z \mapsto -z,$ and multiplication by $-1$. From \eqref{etatran} we obtain for the theta function
\begin{multline*}
%\label{E:theta2M/w}
e^{-\frac{\pi(M-r)^2}{2Mw}}  \zeta^{M-r} \vartheta\left(2Mz-\frac{1}{2}+(M-r)\frac{i}{w};-\frac{2M}{iw}\right)\\
= -i\zeta_{4M}^{M-r}e^{-2\pi Mw\left(z-\frac{1}{4M}\right)^2}\sqrt{\frac{w}{2M}} \vartheta\left(izw-\frac{iw}{4M}-\frac12+\frac{r}{2M};\frac{iw}{2M}\right).
\end{multline*}
Plugging in gives the claim.
\end{proof}

As a special case, we recover Proposition 7 of \cite{CMW}.
\begin{corollary}
\label{C:thetainv-general}
 For $z \in\mathbb C\backslash \mathbb{R}$, we have\\
\textnormal{(i)} If $\ell$ is odd, then we have
 \begin{multline*}(-i\tau)^{-\frac12}\vartheta_{s+\frac{\ell}{2},1,\frac{\ell}{2}}^+ \left(z;-\frac{1}{\tau}\right)\\= \frac12\int_{\mathbb{R}} \frac{e^{\pi i\tau x^2} e^{ \frac{2 \pi i}{\sqrt{\ell}} (s+\ell)\left(x-\sqrt{ \ell} z\right)}}{\cos\left(\sqrt{\ell} \pi \left(x- \sqrt{\ell} z\right)\right)} dx
	+\frac{\left(1-\sgn(\Im(z)\right)}{2\sqrt{\ell}} e^{\pi i\ell \tau z^2}  \vartheta\left(\tau z+\frac{s}{\ell};\frac{\tau}{\ell}\right).
	\end{multline*}
\textnormal{(ii)} If $\ell$ is even, then we have
\begin{align*}
(-i\tau)^{-\frac12}\vartheta_{s+\frac{\ell}{2},0,\frac{\ell}{2}}^+ \left(z;-\frac{1}{\tau}\right)&=-\frac{i}{2} \int_{\mathbb{R}} \frac{e^{\pi i\tau x^2} e^{ \frac{2\pi i}{\sqrt{\ell}} (s+\ell)\left(x-\sqrt{ \ell} z\right)}}{\sin\left(\sqrt{\ell} \pi \left(x- \sqrt{\ell} z\right)\right)} dx \\
&+\frac{\left(1-\sgn(\Im(z)\right)}{2i\sqrt{\ell}} e^{-\frac{\pi i s}{\ell}} e^{\pi i \ell \tau \left(z - \frac{1}{2\ell}\right)^2}  \vartheta\left(\tau z+\frac{s}{\ell}-\frac{\tau}{2\ell};\frac{\tau}{\ell}\right).
\end{align*}
\end{corollary}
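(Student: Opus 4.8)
The plan is to obtain both parts by specializing the general transformation of Proposition~\ref{false-mod-general} to the inversion $\tau\mapsto-1/\tau$. Concretely, I would take $\left(\begin{smallmatrix}a&b\\c&d\end{smallmatrix}\right)=\left(\begin{smallmatrix}0&-1\\1&0\end{smallmatrix}\right)$, so that $\frac{a\tau+b}{c\tau+d}=-\frac1\tau$ and $c\tau+d=\tau$, together with $M=\frac\ell2$, $r=s+\frac\ell2$, and $\varepsilon\equiv\ell\pmod2$ (so $\varepsilon=1$ in case (i) and $\varepsilon=0$ in case (ii)). Since $c=1$, the outer sum collapses to $j\in\{0,1\}$, and since $a=0$ all the phases $\zeta_{4cM}^{a(2Mj-r)^2}$ equal $1$. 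Multiplying through by $(-i\tau)^{-\frac12}$ converts the global prefactor $\sqrt{-i(c\tau+d)/2}=\sqrt{-i\tau/2}$ into the constant $\frac1{\sqrt2}$, which is the ultimate source of the normalizations $\frac12$ and $\frac1{2\sqrt\ell}$ in the asserted formulas.

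For the integral term I would combine the $j=0$ and $j=1$ summands. They share the denominator $1-\zeta^{2\ell}e^{4\pi i\sqrt{\ell/2}\,x}$, and their numerators differ only by the factor $\kappa:=\zeta^{\ell}e^{\pi i\sqrt{2\ell}\,x}$, for which the denominator is exactly $1-\kappa^2$. Hence the odd case ($\varepsilon=1$) assembles into $\frac{1-\kappa}{1-\kappa^2}=\frac1{1+\kappa}$ and the even case ($\varepsilon=0$) into $\frac{1+\kappa}{1-\kappa^2}=\frac1{1-\kappa}$. Pulling out a half-power of $\kappa$ rewrites $\frac1{1+\kappa}$ as a secant and $\frac1{1-\kappa}$ as a cosecant of $\sqrt\ell\,\pi(x-\sqrt\ell z)$, with the leftover exponential (combined with $\zeta^{-r}$) supplying the factor $e^{\frac{2\pi i}{\sqrt\ell}(s+\ell)(x-\sqrt\ell z)}$. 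Finally the substitution $x\mapsto-\sqrt2\,x$ rescales the Gaussian $e^{\frac{\pi i\tau x^2}2}$ to $e^{\pi i\tau x^2}$, flips the residual signs so that the linear exponent and the trigonometric argument match the statement (both integrands being even in the remaining variable), and, with its Jacobian $\sqrt2$, yields the constants $\frac12$ and $-\frac i2$ of (i) and (ii).

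For the theta term the $j$-independent data (including $e^{2\pi iM\tau(z-\frac1{8M})^2}$ and the boundary factor $1-\sgn(\Im(z))$) pull out, leaving $\sum_{j=0}^{1}(-1)^{j\varepsilon}\zeta_{4\ell}^{\ell j-r}\,\vartheta\big(\beta-\tfrac j2;\tfrac{\tau}{4\ell}\big)$ for a fixed argument $\beta$. Inserting the series \eqref{theta} and writing the summation index as $n=m+\frac12$ with $m\in\Z$, the inner sum over $j$ becomes the geometric sum $1+(-1)^{\varepsilon+m}$, which vanishes unless $m\equiv\varepsilon\pmod2$ and otherwise equals $2$; reindexing the surviving terms by $n=2n'$ rescales the modulus from $\frac{\tau}{4\ell}$ to $\frac\tau\ell$. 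This bisection collapses the two summands into a single theta function of modulus $\frac\tau\ell$, which after substituting $r=s+\frac\ell2$ and simplifying the accumulated phases is identified with $\vartheta(\tau z+\frac s\ell;\frac\tau\ell)$ in (i) and with the half-shifted $\vartheta(\tau z+\frac s\ell-\frac\tau{2\ell};\frac\tau\ell)$ in (ii). I expect this last step to be the main obstacle: the trigonometric reductions and rescalings are routine, but verifying that all the accumulated exponential prefactors collapse exactly to $e^{\pi i\ell\tau z^2}$ (resp.\ $e^{-\frac{\pi is}\ell}e^{\pi i\ell\tau(z-\frac1{2\ell})^2}$), and that the constants from $\sqrt{-i\tau/2}$, $\frac1{4\sqrt M}$, and the factor $2$ from the bisection combine to the stated $\frac1{2\sqrt\ell}$, requires careful bookkeeping of the phases across the $j$-sum.
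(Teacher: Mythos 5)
Your proposal is correct and follows essentially the same route as the paper: the paper likewise specializes Proposition \ref{false-mod-general} to $a=d=0$, $c=-b=1$ (so the $j$-sum has the two terms $j=0,1$), and the series bisection you perform on the theta terms is precisely the verification of the ``easily verified identity'' $\frac12 e^{-\frac{\pi iz}{2}-\frac{\pi i}{4}+\frac{\pi i\tau}{16}}\left(\vartheta\left(\frac{z}{2}-\frac{\tau}{8}-\frac14;\frac{\tau}{4}\right)+i\vartheta\left(\frac{z}{2}-\frac{\tau}{8}+\frac14;\frac{\tau}{4}\right)\right)=\vartheta(z;\tau)$ that the paper quotes to combine them. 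Your partial-fraction assembly of the two integrals into the secant/cosecant kernels and the bookkeeping of constants (which I have checked: $\tfrac{1}{\sqrt2}\cdot\sqrt2\cdot\tfrac12$ for the integral and $\tfrac{2}{2\sqrt{2\ell}\cdot\sqrt2}=\tfrac{1}{2\sqrt\ell}$ for the theta term) are exactly the computations the paper leaves implicit.
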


\begin{proof}
The claim follows from Proposition \ref{false-mod-general} with $a=d=0$, $c=-b=1$, using the easily verified identity
	\begin{equation*}
%\label{E:theta2diss}
	\frac12 e^{-\frac{\pi iz}{2}-\frac{\pi i}{4}+\frac{\pi i\tau}{16}}\left(\vartheta\left(\frac{z}{2}-\frac{\tau}{8}-\frac14;\frac{\tau}{4}\right)+i\vartheta\left(\frac{z}{2}-\frac{\tau}{8}+\frac14;\frac{\tau}{4}\right)\right) = \vartheta(z;\tau).
	\end{equation*}
\end{proof}

\begin{remark}
	In view of Theorem \ref{T:Fls}, the modular transformation formulas in this section can be now used to compute modular transformation formulas for
	${\rm ch}[M](z;\tau)$ for every irreducible module $M$. This result then can be interpreted as a statement on characters of  more general
	class of irreducible $L_{\frak{g}}(-\Lambda_0)$-modules parametrized by a continuous parameter.
	Presumably, such results can be used to formulate a Verlinde type formula  for $L_{\frak{g}}(-\Lambda_0)$-modules.
	As this analysis is clearly beyond the scope of this paper, it is left for future investigation.
\end{remark}

\appendix

\rm

\section{Coefficients of Bernoulli polynomials}

Here we calculate some special values for certain generalized Bernoulli polynomials. In particular, we are interested in the values
\begin{equation*}
\mathcal C_{\ell}^\ast := \begin{cases}
(-1)^{\frac{\ell-1}{2}} \frac{B_{\ell-1}^{(\ell)}\left(\frac{\ell}{2}\right)}{2(\ell-1)!}\quad &\textnormal{ if $\ell$ is odd},\vspace{4pt}\\
(-1)^{\frac{\ell}{2}+1}\frac{1}{2\pi} \frac{B_{\ell-2}^{(\ell)}\left(\frac{\ell}{2}\right)}{(\ell-2)!} \quad&\textnormal{ if $\ell$ is even}.
\end{cases}
\end{equation*}

\begin{proposition}\label{Berpolpart}
	We have
	\[
\mathcal C_\ell = \mathcal C_\ell^\ast.
\]
\end{proposition}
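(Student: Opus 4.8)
The plan is to funnel both the odd and even branches into a single Taylor coefficient of the \emph{even} function $\phi(u)^\ell$, where $\phi(u):=u/\sinh u$, and then to read that coefficient off as a residue. The entry point is the order-$\ell$ Bernoulli generating function evaluated at $x=\ell/2$: since $\big(\frac{w}{e^w-1}\big)^\ell e^{\frac{\ell}{2}w}=\big(\frac{w/2}{\sinh(w/2)}\big)^\ell$, substituting $u=w/2$ gives
\[
\phi(u)^\ell=\sum_{n\ge 0}B_n^{(\ell)}\!\left(\tfrac{\ell}{2}\right)\frac{(2u)^n}{n!},\qquad\text{i.e.}\qquad B_n^{(\ell)}\!\left(\tfrac{\ell}{2}\right)=\frac{n!}{2^n}\,[u^n]\phi(u)^\ell .
\]
Thus $\mathcal C_\ell^\ast$ is governed by $[u^{\ell-1}]\phi^\ell$ (for $\ell$ odd) and by $[u^{\ell-2}]\phi^\ell$ (for $\ell$ even); these are exactly the top nonvanishing even coefficients below $u^\ell$, and since $\phi^\ell/u^\ell=\sinh^{-\ell}u$ they equal $\operatorname{Res}_{u=0}\sinh^{-\ell}u$ and $\operatorname{Res}_{u=0}(u\sinh^{-\ell}u)$ respectively.

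For both residues I would use the rational substitution $q=e^{2u}$, under which $\sinh^{-\ell}u=2^\ell q^{\ell/2}(q-1)^{-\ell}$ and $du=\frac{dq}{2q}$, sending a small loop around $u=0$ to a loop around $q=1$. The odd case is then immediate:
\[
\operatorname{Res}_{u=0}\frac{1}{\sinh^\ell u}=2^{\ell-1}\operatorname{Res}_{q=1}\frac{q^{\frac{\ell}{2}-1}}{(q-1)^\ell}=\frac{2^{\ell-1}}{(\ell-1)!}\prod_{j=0}^{\ell-2}\left(\tfrac{\ell}{2}-1-j\right)=(-1)^{\frac{\ell-1}{2}}2^{-(\ell-1)}\binom{\ell-1}{\tfrac{\ell-1}{2}},
\]
where the product of the $\ell-1$ half-integers, symmetric about $0$, is evaluated by pairing conjugate factors as $-(k-\tfrac12)^2$ and using $(2m-1)!!=(2m)!/(2^m m!)$ with $m=\tfrac{\ell-1}{2}$.

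The even case is the one genuinely delicate point, because now $u=\tfrac12\log q$ is transcendental in $q$. Here I would write $[u^{\ell-2}]\phi^\ell=2^{\ell-2}\,[w^{\ell-1}]\big((1+w)^{\frac{\ell}{2}-1}\log(1+w)\big)$ via $q=1+w$, and exploit $\frac{\partial}{\partial a}(1+w)^a=(1+w)^a\log(1+w)$ to identify this with $\frac{d}{da}\binom{a}{\ell-1}\big|_{a=\frac{\ell}{2}-1}$. The crucial simplification is that $\binom{a}{\ell-1}=\frac{1}{(\ell-1)!}\prod_{j=0}^{\ell-2}(a-j)$ has, at the \emph{integer} point $a=\tfrac{\ell}{2}-1$, exactly one vanishing factor (at $j=\tfrac{\ell}{2}-1$); hence in $\frac{d}{da}\prod_j(a-j)=\sum_i\prod_{j\ne i}(a-j)$ only the term $i=\tfrac{\ell}{2}-1$ survives, yielding $\prod_{0\le j\le\ell-2,\,j\ne\frac{\ell}{2}-1}(\tfrac{\ell}{2}-1-j)=(-1)^{\frac{\ell}{2}-1}\big((\tfrac{\ell}{2}-1)!\big)^2$, and therefore $[u^{\ell-2}]\phi^\ell=(-1)^{\frac{\ell}{2}-1}2^{\ell-2}\big((\tfrac{\ell}{2}-1)!\big)^2/(\ell-1)!$.

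Finally I would substitute these coefficient values back into $B^{(\ell)}_{\ell-1}(\tfrac{\ell}{2})$ and $B^{(\ell)}_{\ell-2}(\tfrac{\ell}{2})$, insert them into $\mathcal C_\ell^\ast$, and compare with $\mathcal C_\ell$ after rewriting $\Gamma(\tfrac{\ell+1}{2})$ as $(\tfrac{\ell-1}{2})!$ when $\ell$ is odd and as $\frac{\ell!}{2^\ell(\ell/2)!}\sqrt{\pi}$ when $\ell$ is even. In the odd branch both sides reduce to $2^{-(2\ell-1)}\binom{\ell-1}{(\ell-1)/2}$, and in the even branch both reduce to $\big((\tfrac{\ell}{2}-1)!\big)^2/\big(2\pi(\ell-1)!\big)$, with the factor $\tfrac{1}{2\pi}$ in $\mathcal C_\ell^\ast$ absorbing precisely the $\sqrt\pi$ from the half-integer Gamma value and the signs matching since $\ell$ is even. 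The only real obstacle is the $\log$-term bookkeeping of the even case handled above; the remaining steps are routine factorial and binomial simplifications.
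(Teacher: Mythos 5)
Your proof is correct; I checked the residue evaluations and the final bookkeeping in both parities, and everything matches (both sides reduce to $2^{1-2\ell}\binom{\ell-1}{(\ell-1)/2}$ for $\ell$ odd and to $\bigl(\bigl(\tfrac{\ell}{2}-1\bigr)!\bigr)^2/\bigl(2\pi(\ell-1)!\bigr)$ for $\ell$ even). Up to the rescaling $z=2u$, your route coincides with the paper's own second proof: your substitution $q=e^{2u}$, $w=q-1$ is exactly the paper's formal change of variable $w=e^z-1$, the framing through the even function $u/\sinh u$ being a cosmetic (if pleasant) way of making the parity constraint on the coefficients visible, and your odd case reproduces verbatim the paper's evaluation $\operatorname{Res}_{w=0}\,(1+w)^{\frac{\ell}{2}-1}w^{-\ell}=\binom{\ell/2-1}{\ell-1}$. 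The one genuine divergence is the even case: where the paper expands $\log(1+w)=\sum_{n\geq 1}(-1)^{n+1}w^n/n$ and evaluates the resulting alternating sum $\sum_k \binom{n}{k}(-1)^k\frac{1}{k+c}$ via a beta-integral identity, you instead recognize $(1+w)^a\log(1+w)=\partial_a (1+w)^a$ and differentiate $\binom{a}{\ell-1}$ at the integer root $a=\tfrac{\ell}{2}-1$, where only one product-rule term survives. Your variant is arguably cleaner — no infinite series and no beta function — at the small cost of justifying coefficientwise differentiation in $a$, which is immediate since each $\binom{a}{k}$ is a polynomial in $a$. The paper additionally offers a first proof via the recurrence $\mathcal C_{\ell+2}=\frac{\ell}{4(\ell+1)}\mathcal C_\ell$ together with an integral representation of the Bernoulli values in terms of $\int_{\R}\cosh(z)^{-\ell}\,dz$, a route your argument does not need.
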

\begin{proof}
	We offer 2 proofs. The first proof uses recurrences. We first assume that $\ell$ is odd. It is direct that
	\begin{equation}\label{rec}
	\mathcal C_1=\mathcal C_1^*=\frac12,\quad \mathcal C_{\ell+2}=\frac{\ell}{4(\ell+1)}\mathcal C_\ell.
	\end{equation}
	Thus the identity follows, once we show that $\mathcal C_{\ell}^*$ also satisfies \eqref{rec} (with $\mathcal C_\ell$ replaced by $\mathcal C_\ell^*$). For this we use the following integral representation of $\mathcal C_\ell^*$
	\[
	 B_{\ell-1}^{(\ell)}\left(\frac{\ell}{2}\right)=-2^{2-\ell}(\ell-1)!\frac{i^{-\ell}}{2\pi i}\int_{\mathbb R}\frac{1}{\cosh(z)^\ell}dz
	\]
	which follows by a direct residue calculation. The recurrence \eqref{rec} now follows, using
	\begin{equation}\label{diffcos}	
	 \frac{d^2}{dz^2}\frac{1}{\cosh(z)^\ell}=\frac{\ell^2}{\cosh(z)^\ell}-\frac{\ell(\ell+1)}{\cosh(z)^{\ell+2}}.
	\end{equation}
	
	We next assume that $\ell$ is even. Then
	\begin{equation*}
	\mathcal C_2=\mathcal C_2^* = \frac{1}{2 \pi},\qquad \mathcal C_{\ell+2} = \frac{\ell}{4(\ell+1)} \mathcal C_\ell.
	\end{equation*}
	In this case, we have the representation
	\begin{equation*}
	B_{\ell-2}^{(\ell)}\left(\frac{\ell}{2}\right) = -2^{3-\ell} (\ell-2)! i^{-\ell} \int_{\R} \frac{z}{\cosh(z)^\ell}dz.
	\end{equation*}
	Using \eqref{diffcos}, the integral becomes
	\begin{equation*}
	\frac{\ell}{\ell+1} \int_\R \frac{z}{\cosh(z)^{\ell+2}}dz - \frac{1}{\ell(\ell+1)} \int_\R z \frac{d^2}{dz^2} \frac{1}{\cosh(z)^\ell} dz.
	\end{equation*}
	Using integration by parts on the second term then gives the claim. \\
\noindent {\em Second proof.} The second proof uses the formal Residue Theorem. Writing  $w=g(z) \in z\mathbb{C}[[z]]$ with $g'(0) \neq 0$, we have
$${\rm Res}_{w=0} f(w)={\rm Res}_{z=0} f(g(z)) g'(z).$$

For $\ell$ odd, we
%Let $f(z)=\left(\frac{e^{\frac{\ell}{2} z}}{e^z-1} \right)$. Let $u=e^{z}-1$
let $f(w):=\frac{(w+1)^{\frac{\ell}{2}-1}}{w^{\ell}}$ and $w:=g(z)=e^{z}-1$. Then
$${\rm Res}_{z=0} \frac{e^{\frac{\ell}{2} z}}{(e^z-1)^\ell}= {\rm Res}_{w=0} \frac{(w+1)^{\frac{\ell}{2}-1}}{w^{\ell}}  ={ \frac{\ell}{2}-1 \choose \ell -1}.$$
This implies the statement.

For $\ell$ even, we have
$$I_{\ell}:={\rm Res}_{w=0}  \frac{\log(1+w) (w+1)^{\frac{\ell}{2}-1}}{w^{\ell}}={\rm Res}_{z=0} \frac{z e^{\frac{\ell}{2} z}}{(e^z-1)^\ell}.$$
Expanding $\log(1+w)=\sum_{n=1}^\infty (-1)^{n+1} \frac{w^n}{n}$, we get
\begin{equation*}
I_{\ell}={\rm Res}_{w=0} \frac{(w+1)^{\frac{\ell}{2}-1}}{w^\ell} \sum_{n=1}^\infty (-1)^{n+1} \frac{w^n}{n} = \sum_{j=0}^{\frac{\ell}{2}-1} {\frac{\ell}{2}-1 \choose j} \frac{(-1)^{\frac{\ell}{2}+1+j}}{\frac{\ell}{2}+j}.
\end{equation*}
%We can compute this sum by using a special case of Gauss' $_{2}F_{1}$ summation {\bf AM: if you want we can add the general formula KB: I vote for yes.}
To this end, we need the evaluation
\begin{equation} \label{identity}
\sum_{k=0}^n {n \choose k} (-1)^k \frac{1}{k+c}=\frac{n! (c-1)!}{(n+c)!}.
\end{equation}
%{\bf KM: Here is yet another way to prove this identity; it is simple enough that it just requires the Beta function rather than a full $_2 F_1$ evaluation:}
To see \eqref{identity}, we integrate the Binomial Theorem to obtain (here ${\rm B}(a,b)$ is the usual Beta function)
\begin{align*}
\sum_{k=0}^n \binom{n}{k} \frac{(-1)^k}{k+c}&=\int_0^1 \sum_{k=0}^n \binom{n}{k} (-1)^k x^{k+c-1} dx = \int_0^1 x^{c-1} (1-x)^n dx ={\rm B}(c,n+1) \\
&= \frac{\Gamma(c) \Gamma(n+1)}{\Gamma(c+n+1)},
\end{align*}
by the standard evaluation of the Beta integral.

Letting $n=\frac{\ell}{2}-1$ and $c=\frac{\ell}{2}$ in (\ref{identity}), we then obtain $I_\ell=(-1)^{\frac{\ell}{2}+1} \frac{(\frac{\ell}{2}-1)! (\frac{\ell}{2}-1)!}{(\ell-1)!}$ as desired.

\end{proof}

\end{document}